\documentclass[12pt, reqno]{amsart}
\usepackage{enumerate,amsmath,amssymb,bm,ascmac,amsthm,url}
\usepackage{tikz}
\usepackage[margin=30truemm]{geometry}
\usepackage{here}

\theoremstyle{definition}
\newtheorem{thm}{Theorem}[section]
\newtheorem{thm*}{Theorem}
\newtheorem{defi}[thm]{Definition}
\newtheorem{defi*}{Definition}
\newtheorem{lem}[thm]{Lemma}
\newtheorem{lem*}{Lemma}
\newtheorem{pro}[thm]{Proposition}
\newtheorem{pro*}{Proposition}

\newtheorem{cor}[thm]{Corollary}

\newcommand{\MC}[1]{\mathcal{#1}}
\newcommand{\MB}[1]{\mathbb{#1}}

\newcommand{\Span}[1]{\langle #1 \rangle}

\newcommand{\NH}{\tilde{H}}

\DeclareMathOperator{\Spec}{Spec}
\DeclareMathOperator{\re}{Re}
\DeclareMathOperator{\Tr}{Tr}

\usepackage{tikz}
\usetikzlibrary{backgrounds}
\tikzset{v/.style = {
circle, fill = black, inner sep = 0.8mm, line width = 1pt}}

\makeatletter

\@addtoreset{equation}{section}
\makeatother

\title[Quantum walks defined by digraphs]{Quantum walks defined by digraphs and generalized Hermitian adjacency matrices}

\author[S. Kubota]{Sho Kubota$^1$}
\thanks{$^1$S.K. is supported by JSPS KAKENHI (Grant No. 20J01175).}
\author[E. Segawa]{Etsuo Segawa$^2$}
\thanks{$^2$E.S. acknowledges financial support from the Grant-in-Aid for Scientific Research (C) 19K03616, 
and Research Origin for Dressed Photon.}
\author[T. Taniguchi]{Tetsuji Taniguchi$^3$}
\thanks{$^3$T.T.  was supported by JSPS KAKENHI (Grant No. 16K05263).}
\address{Department of Applied Mathematics, Faculty of Engineering, Yokohama National University,
Hodogaya, Yokohama 240-8501, Japan}
\email{kubota-sho-bp@ynu.ac.jp}
\address{Graduate School of Education Center and Graduate School of Environment and Information Sciences, Yokohama National University,
Hodogaya, Yokohama 240-8501, Japan}
\email{segawa-etsuo-tb@ynu.ac.jp}
\address{Department of Electronics and Computer Engineering,
Hiroshima Institute of Technology, Hiroshima 731-5193, Japan}
\email{t.taniguchi.t3@cc.it-hiroshima.ac.jp}

\date{}
\keywords{Quantum walk, twisted Szegedy walk, positive support,
digraph, Hermitian adjacency matrix, spectral graph theory}
\subjclass[2010]{05C50; 05C20; 05C81; 81Q99}

\begin{document}
\maketitle
\begin{abstract}
We propose a quantum walk defined by digraphs (mixed graphs).
This is like Grover walk that is perturbed by
a certain complex-valued function defined by digraphs.
The discriminant of this quantum walk is a matrix
that is a certain normalization of generalized Hermitian adjacency matrices.
Furthermore,
we give definitions of the positive and negative supports of the transfer matrix
and exhibit explicit formulas of supports of their square.
Also, we provide tables on the identification of digraphs by their eigenvalues.
\end{abstract}

\section{Introduction}

Since the beginning of the millennium,
quantum walks have been actively studied not only in mathematics but also in physics.
Several papers and books on the implementation of the experimental setups of them
have also been published \cite{FSS, GS, MW, NP},
and quantum walks on networks are reviewed in \cite{BDF}.
Quantum walks on digraphs are also studied as applications of topological phase's simulators,
see for example \cite{EKOS}\footnote{
This dynamics of the quantum walk treated on a half plain of the two-dimensional lattice \cite{EKOS}
can be regarded as a quantum walk on a digraph,
which is not symmetric,
because a quantum walker moves to $x$ and $y$ directions, alternatively.
}.
It can be said that quantum walks are studied in a very wide range of contexts.
Due to the wide range of them,
fundamental research such as proposing a model defined by digraphs
is important for future development of application to quantum walks.
In this paper,
we introduce a quantum walk model which can be defined not only undirected graphs but also digraphs.
Moreover, it allows us realistic eigenvalue analysis.

The Grover walks are the simplest quantum walks to be defined by providing graphs,
and they are recently well-studied,
including topics on quantum search \cite{Portugalbook}, 
graph isomorphism problem~\cite{EHSW}, 
periodicity~\cite{HKSS2017, KSTY2018, Y2017, Y2019},
relation to the Ihara Zeta function~\cite{KS2011, KSS2019, RenETAL}, and so on.
They are originally defined by undirected graphs,
but we will extend them to digraphs.

In order to define our quantum walk,
we introduce a certain complex-valued function $\theta$ and generalize the shift operator,
which can be regarded as perturbations by $\theta$.
We define such function $\theta$ from digraphs and
consider something like Grover walk equipped with $\theta$.
This quantum walk can be seen as the one to have information on digraphs.
Moreover, the discriminant of our quantum walk (whose definition is given in (\ref{R01}))
coincides with a matrix that is a certain normalization of the Hermitian adjacency matrix,
which has been introduced in the context of spectral graph theory \cite{GM, LL}.

As Emms, Hancock, Severini and Wilson \cite{EHSW}
attempted to distinguish strongly regular graphs
by their spectrum of matrices coming from quantum walks,
we also want to use matrices coming from quantum walks to identify digraphs.
For this purpose,
we exhibit explicit formulas for the positive and negative supports of the square
of the transfer matrix of our quantum walk.
In the case of undirected regular graphs,
the eigenvalues of the positive support of the square of the Grover transfer matrices
are determined by the eigenvalues of the adjacency matrices \cite{GG}.
However, we will see that this is not always true for digraphs.
Also, we provide tables on the identification of digraphs by their eigenvalues.

This paper is organized as follows.
In Section~2,
we review basic terminologies related to graphs and quantum walks,
and define our quantum walk.
In Section~3, 
we give a definition of generalized Hermitian adjacency matrices,
and state that our quantum walk and these matrices are related to each other.
In Section~4,
we show so-called spectral mapping theorem,
that is,
we show that the eigenvalues of the transfer matrix can be written
roughly by those of generalized Hermitian adjacency matrices.
In Section~5,
we calculate the eigenvalues of the transfer matrix of the digraphs
cospectral with $K_n$ in the sense of Hermitian adjacency matrices
found by Guo and Mohar \cite{GM}.
In fact, it can be seen that
these digraphs are also cospectral in the sense of our transfer matrix.
In Section~6,
we give a definition of the positive and negative supports
of our transfer matrix to the $n$-th power.
For the case of $n = 2$,
we show that these matrices can be roughly expressed by
the ones of the Grover transfer matrix on the underlying graph.
In Section~7,
we confirm that Guo--Mohar's digraphs,
which could not be identified even by the eigenvalues of our transfer matrix,
can be partially identified by the supports of the square.
In Section~8,
consideration on the identification of digraphs
by the eigenvalues of matrices newly obtained in this study is given.
In this consideration, tables similar to the ones for adjacency matrices and Hermitian adjacency matrices
made by Guo and Mohar \cite{GM} are given.

\subsection{Histories of quantum walks on digraphs}

In this subsection,
we review previous works on quantum walks on digraphs.
In general,
it is difficult to define quantum walks on digraphs themselves.
Historically, Severini \cite{S02} focused on 1-factors of digraphs
and first defined coined quantum walks on digraphs.
In 2003, the following year,
Severini \cite{S03} considered a condition called strong quadrangularity
and showed that a line digraph is the digraph of a unitary matrix,
if and only if the digraph is Eulerian.
From the result,
we saw that quantum walks on digraphs can be defined when the digraph is Eulerian.
After that,
Acevedo and Gobron \cite{AG} defined quantum walks on Cayley graphs.
The Cayley graphs can be digraphs depending on connection sets,
so in this sense,
they also defined quantum walks on digraphs.
In 2007,
Montanaro \cite{M} successfully explained a necessary and sufficient condition
for defining a discrete-time quantum walk on a digraph by a property called reversibility.
For digraphs that are not reversible,
a method called a partially quantum walk has been presented. 
On the other hand,
Hoyer and Meyer \cite{HM} gave the first example of faster transport with a quantum walk on digraphs.
Other recent studies of quantum walks on digraphs are in \cite{CP, GL, L, Z}.

In contrast to previous studies,
we propose quantum walks defined by digraphs in another form.
Our idea is to generalize the Grover walks.
Roughly speaking, we consider the generalized Grover walks on the underlying graphs of digraphs
and change the phase using information of the digraphs.
By this idea,
we can not only easily define quantum walks for any digraphs,
but also provide models to allow realistic eigenvalue analysis.
Indeed,
we explicitly reveal eigenvalues of the transfer matrices of quantum walks defined by digraphs
cospectral with the complete graphs.

\subsection{A remark for readers}
The transfer matrix of our quantum walks is defined by digraphs.
However, our quantum walk is not the one {\it on} digraphs, so readers should note it.

\section{Grover walks and their generalization}

First, we review several notations on graphs.
Let $G=(V(G), E(G))$ be a finite simple and connected graph
with the vertex set $V(G)$ and the edge set $E(G)$.
For an edge $uv \in E(G)$, the arc from $u$ to $v$ is denoted by $(u, v)$.
The origin and terminus vertices of $e=(u, v)$ are denoted by $o(e), t(e)$, respectively.
We express $e^{-1}$ as the inverse arc of $e$.
We define $A(G)=\{ (u, v), (v, u) | uv \in E(G) \}$,
which is the set of the symmetric arcs of $G$.

\subsection{Grover walks on undirected graphs}

Let $G$ be a finite simple and connected graph.
Then, $G$ defines the following three complex matrices $K,S,C$.
First, $K = K(G)$ is the complex matrix whose rows are indexed by $V(G)$
and columns are indexed by $A(G)$, defined by
\[ K_{v,a} = \frac{1}{\sqrt{\deg t(a)}} \delta_{v,t(a)}, \]
where $\delta_{a,b}$ is the Kronecker delta symbol.
Note that $KK^* = I$.
Second, $S = S(G)$ is the complex matrix indexed by $A(G)$,
where
\[ S_{ab} = \delta_{a,b^{-1}}. \]
Third, $C = C(G)$ is the complex matrix indexed by $A(G)$, where
\[ C = 2K^*K-I. \]
The matrices $S$ and $C$ are
called the {\it shift operator} and the {\it coin operator}, respectively.
Usually, it is customary for these matrices $K, S, C$ to be defined
as mappings on $l^2$-spaces on $V(G)$ or $A(G)$,
but we would like to study quantum walks from the viewpoint of spectral graph theory,
so these operators are displayed in matrix and we will discuss by using matrices.

The {\it Grover transfer matrix} $U = U(G)$ is the product of
the shift operator and the coin operator, that is,
\[ U = SC. \]
Since $S$ and $C$ are unitary, $U$ is also a unitary matrix,
so $U$ can define a quantum walk on a given graph $G$.
We call this quantum walk the {\it Grover walk}.
Note that $U$ is the complex matrix indexed by $A(G)$
and the $(a,b)$-component can be calculated as
\begin{equation} \label{53}
U_{ab} = 
\frac{2}{\deg_{G} (t(b)) } \delta_{t(b), o(a)} - \delta_{a^{-1}, b},
\end{equation}
so some papers directly give a definition of the Grover transfer matrix by this computation.
In addition,
the {\it discriminant operator} $T$ of Grover walk is defined by
\begin{equation} \label{R00}
T = KSK^*,
\end{equation}
which plays an important role to analyze spectrum of $U$.

\subsection{Quantum walks defined by digraphs}

Based on the Grover walks,
our quantum walks will be defined.
Let $G$ be a finite simple and connected graph
and let $\theta : A(G) \to \MB{R}$ be a function.
The complex matrix $S_{\theta} = S_{\theta}(G)$ is the one indexed by $A(G)$, where
\[ (S_{\theta})_{ab} = e^{\theta(b)i}\delta_{a,b^{-1}}. \]
We consider a condition where $S_{\theta}$ is unitary and self-adjoint.
The following lemma is easy, but it is essentially important.


\begin{lem} \label{300}
{\it
With the above notations, we have the following:
\begin{enumerate}[(i)]
\item $S_{\theta}$ is unitary;
\item $S_{\theta}$ is self-adjoint if and only if
$\theta(a) + \theta(a^{-1}) \in 2\pi \MB{Z}$ for any $a \in A(G)$.
\end{enumerate}
}
\end{lem}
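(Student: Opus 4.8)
The plan is to treat $S_{\theta}$ as a weighted version of the permutation matrix that implements the involution $a \mapsto a^{-1}$ on $A(G)$, all of whose weights lie on the unit circle, and to verify both claims by a direct entrywise computation.

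For part (i), I would first record that $(S_{\theta})_{ab} = e^{\theta(b)i}\delta_{a,b^{-1}}$ is nonzero only when $b^{-1} = a$, so each row and each column of $S_{\theta}$ contains exactly one nonzero entry, of modulus $1$. Then I would compute
\[
(S_{\theta} S_{\theta}^{*})_{ac} = \sum_{b \in A(G)} (S_{\theta})_{ab}\,\overline{(S_{\theta})_{cb}}
= \sum_{b \in A(G)} e^{\theta(b)i}e^{-\theta(b)i}\,\delta_{a,b^{-1}}\,\delta_{c,b^{-1}}
= \sum_{b \in A(G)} \delta_{a,b^{-1}}\,\delta_{c,b^{-1}} = \delta_{a,c},
\]
since the unique $b$ with $b^{-1} = a$ also forces $c = a$. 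Hence $S_{\theta} S_{\theta}^{*} = I$, and because a one-sided inverse of a square matrix is two-sided, $S_{\theta}^{*} S_{\theta} = I$ as well, so $S_{\theta}$ is unitary.

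For part (ii), I would compute the adjoint entrywise, $(S_{\theta}^{*})_{ab} = \overline{(S_{\theta})_{ba}} = e^{-\theta(a)i}\delta_{b,a^{-1}}$, and note that $\delta_{a,b^{-1}} = \delta_{b,a^{-1}}$ because both equalities are equivalent to $b = a^{-1}$. Comparing this with $(S_{\theta})_{ab} = e^{\theta(b)i}\delta_{a,b^{-1}}$, the identity $S_{\theta} = S_{\theta}^{*}$ holds if and only if $e^{\theta(b)i} = e^{-\theta(a)i}$ whenever $b = a^{-1}$, that is, $e^{(\theta(a) + \theta(a^{-1}))i} = 1$ for every $a \in A(G)$, which is precisely the condition $\theta(a) + \theta(a^{-1}) \in 2\pi\MB{Z}$.

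There is no genuine obstacle here: each part collapses to a one-line Kronecker-delta manipulation. The only point deserving a moment's care is the identification $\delta_{a,b^{-1}} = \delta_{b,a^{-1}}$ together with the observation that summing against this delta reduces the sum to a single term; once that is noted, the unimodularity of $e^{\theta(\cdot)i}$ does the rest.
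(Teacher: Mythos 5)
Your proof is correct, and the paper itself omits any proof of this lemma (it is stated as ``easy''), so your entrywise Kronecker-delta verification is precisely the routine computation the authors leave to the reader. Both parts check out: the sum in (i) collapses to the single term $b=a^{-1}$, and the equivalence in (ii) correctly reduces to $e^{(\theta(a)+\theta(a^{-1}))i}=1$.
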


By this lemma,
the matrix $U_{\theta} = S_{\theta}C$ is also unitary,
where $C = C(G)$ is the coin operator given in the previous subsection.
We call $U_{\theta}$ the {\it transfer matrix} of a graph $G$.
In addition,
the {\it discriminant operator} $T_{\theta}$ of our quantum walk is defined by
\begin{equation} \label{R01}
T_{\theta} = KS_{\theta}K^*.
\end{equation}
As we will see later,
the discriminant of our quantum walk equipped with an appropriate function $\theta$
will be a meaningful matrix.
In this paper,
we discuss the quantum walks defined by $U_{\theta}$.

Now, we check several terminologies on digraphs.
A {\it digraph} $G$ consists of a finite set $V(G)$ of vertices
together with a subset $A(G) \subset V(G) \times V(G) \setminus \{ (x,x) \mid x \in V(G) \}$
of ordered pairs called {\it arcs}.
Define $A(G)^{-1} = \{ a^{-1} \mid a \in A(G) \}$ and $A(G)^{\pm 1} = A(G) \cup A(G)^{-1}$.
The graph $G^{\pm} = (V(G), A(G)^{\pm 1})$ is
so-called the {\it underlying graph} of a digraph $G$,
and this is often regarded as an undirected graph.
If $(x,y) \in A(G) \cap A(G)^{-1}$,
we say that the unordered pair $\{x,y\}$ is a {\it digon} of $G$.
For a vertex $x \in V(G)$,
define $\deg_{G} x = \deg_{G^{\pm}} x$.
A digraph $G$ is $k$-regular if $\deg_{G}x = k$ for any vertex $x \in V(G)$.
Throughout this paper, we assume that digraphs are weakly connected,
i.e.,
for any digraph $G$, we suppose that $G^{\pm}$ is connected.\footnote{
Quantum walks we will define are a generalization of the Grover walks.
As we will see later,
walkers in our quantum walk are on $G^{\pm}$.
In the case of Grover walks,
the connectedness of graphs is usually assumed,
so we need assume that $G^{\pm}$ is connected.
}

A function $\theta$ which satisfies the condition (ii) of Lemma~\ref{300}
can be defined from digraphs as follows.
Let $\eta \in \MB{R}$.
For a digraph $G = (V(G), A(G))$,
we define a function $\theta : A(G^{\pm}) \to \MB{R}$
by
\[
\theta(a) = \begin{cases}
\eta \qquad &\text{if $a \in A(G) \setminus A(G)^{-1}$,} \\
-\eta \qquad &\text{if $a \in A(G)^{-1} \setminus A(G)$,} \\
0 \qquad &\text{if $a \in A(G) \cap A(G)^{-1}$.}
\end{cases}
\]
Clearly,
\begin{equation} \label{301}
\theta(a) + \theta(a^{-1}) = 0
\end{equation}
holds for any $a \in A(G^{\pm})$.
We call this the {\it $\eta$-function} of a digraph $G$.
Then, we can think that a digraph $G$ equipped with an $\eta$-function $\theta$ defines
the transfer matrix $U_{\theta} = S_{\theta}(G^{\pm}) C(G^{\pm})$.
The operators (matrices) used in our quantum walks are summarized in Table~\ref{1000},
where $G = (V(G), A(G))$ is a digraph equipped with an $\eta$-function $\theta$.

\begin{table}[H]
  \centering
  \begin{tabular}{|c|c|c|c|}
\hline
Notation & Name & Indices of rows and columns & Definition \\
\hline
\hline
$K$ & Boundary & $V(G) \times A(G^{\pm})$ & $K_{v,a} =  \frac{1}{ \sqrt{\deg t(a)} } \delta_{v, t(a)}$ \\
\hline
$C$ & Coin & $A(G^{\pm}) \times A(G^{\pm})$ &$ C = 2K^*K - I$ \\
\hline
$S_{\theta}$ & Shift & $A(G^{\pm}) \times A(G^{\pm})$ & $(S_{\theta})_{ab} = e^{\theta(b)i}\delta_{a,b^{-1}}$ \\
\hline
$U_{\theta}$ & Transfer & $A(G^{\pm}) \times A(G^{\pm})$  & $U_{\theta} = S_{\theta} C$ \\
\hline
$T_{\theta}$ & Discriminant & $V(G) \times V(G)$ & $T_{\theta} = KS_{\theta}K^*$ \\ \hline
\end{tabular}
 \caption{The operators (matrices) used in our quantum walk} \label{1000}
\end{table}

\subsection{Example}

We provide an example.
As mentioned in the previous subsection,
our quantum walk is defined by a digraph and a real number $\eta$.
Let $G$ be a digraph in Figure~\ref{a01}.
We consider this digraph $G$ and $\eta = \pi/2$.
\begin{figure}[ht]
\begin{center}
\begin{tikzpicture}
[scale = 0.8,
line width = 1pt,
]
  \node[v] (1) at (-3, 0) {};
  \draw (-3,0.5) node {$v_1$};
  \node[v] (2) at (0, 0) {};
  \draw (0,0.5) node {$v_2$};
  \node[v] (3) at (3, 1.7) {};
  \draw (3.5,1.7) node {$v_3$};
  \node[v] (4) at (3, -1.7) {};
  \draw (3.5,-1.7) node {$v_4$};
  \draw[->] (1) to [bend left = 15] (2);
  \draw[->] (2) to [bend left = 15] (1);
  \draw (-1.5,0.6) node {$a$};
  \draw (-1.5,-0.6) node {$a^{-1}$};
  \draw[->] (2) to (4);
  \draw (1.5, -1.2) node {$c$};
  \draw[->] (4) to (3);
  \draw (3.5, 0) node {$d$};
  \draw[->] (3) to (2);
  \draw (1.5, 1.2) node {$b$};
\end{tikzpicture}
\caption{Digraph $G$} \label{a01}
\end{center}
\end{figure}
For the sets of indices $V(G) = \{ v_1, v_2, v_3, v_4 \}$ and
$A(G^{\pm}) = \{ a, a^{-1}, b, b^{-1}, c, c^{-1}, d, d^{-1}\}$,
we have
\begin{align*}
K &=
\begin{bmatrix}
1&0&0&0&0&0&0&0 \\
0&\frac{1}{\sqrt{3}}&\frac{1}{\sqrt{3}}&0&0&\frac{1}{\sqrt{3}}&0&0 \\
0&0&0&\frac{1}{\sqrt{2}}&0&0&\frac{1}{\sqrt{2}}&0 \\
0&0&0&0&\frac{1}{\sqrt{2}}&0&0&\frac{1}{\sqrt{2}}
\end{bmatrix}, \\
C = 2K^* K - I &=
\begin{bmatrix}
1&0&0&0&0&0&0&0 \\
0&-\frac{1}{3}&\frac{2}{3}&0&0&\frac{2}{3}&0&0 \\
0&\frac{2}{3}&-\frac{1}{3}&0&0&\frac{2}{3}&0&0 \\
0&0&0&0&0&0&1&0 \\
0&0&0&0&0&0&0&1 \\
0&\frac{2}{3}&\frac{2}{3}&0&0&-\frac{1}{3}&0&0 \\
0&0&0&1&0&0&0&0 \\
0&0&0&0&1&0&0&0
\end{bmatrix}, \\
S_{\theta} &=
\begin{bmatrix}
0&1&0&0&0&0&0&0 \\
1&0&0&0&0&0&0&0 \\
0&0&0&-i&0&0&0&0 \\
0&0&i&0&0&0&0&0 \\
0&0&0&0&0&-i&0&0 \\
0&0&0&0&i&0&0&0 \\
0&0&0&0&0&0&0&-i \\
0&0&0&0&0&0&i&0 
\end{bmatrix},
\end{align*}
so the transfer matrix is
\[
U_{\theta} = S_{\theta}C =
\begin{bmatrix}
0&-\frac{1}{3}&\frac{2}{3}&0&0&\frac{2}{3}&0&0 \\
1&0&0&0&0&0&0&0 \\
0&0&0&0&0&0&-i&0 \\
0&\frac{2}{3}i&-\frac{1}{3}i&0&0&\frac{2}{3}i&0&0 \\
0&-\frac{2}{3}i&-\frac{2}{3}i&0&0&\frac{1}{3}i&0&0 \\
0&0&0&0&0&0&0&i \\
0&0&0&0&-i&0&0&0 \\
0&0&0&i&0&0&0&0 
\end{bmatrix}.
\]
Unlike the conventional Grover walks,
our transfer matrix takes imaginary numbers.
It can be seen that these imaginary numbers give information of digraphs.

\section{Hermitian adjacency matrices}

Guo--Mohar \cite{GM} and Li--Liu \cite{LL} independently defined
the Hermitian adjacency matrix as a new matrix determined by a digraph.
Since this matrix is Hermitian,
the eigenvalues are real numbers
and the interlacing theorem can be used,
so this is a convenient matrix for estimating invariants of graphs.
Basic properties of the Hermitian adjacency matrix can be seen in both papers.
In \cite{GM}, we can see estimation on the maximum eigenvalue and the spectral radius.
In \cite{LL}, consideration of the Hermitian energy is carried out.

For a digraph $G$,
the {\it Hermitian adjacency matrix} $H = H(G)$ is
the complex matrix indexed by the vertex set $V(G)$, where
\[
H_{xy} = \begin{cases}
1 \qquad &\text{if $(x,y) \in A(G) \cap A(G)^{-1}$,} \\
i \qquad &\text{if $(x,y) \in A(G) \setminus A(G)^{-1}$,} \\
-i \qquad &\text{if $(x,y) \in A(G)^{-1} \setminus A(G)$,} \\
0 \qquad &\text{otherwise.}
\end{cases}
\]
When every edge of $G$ lies in a digon,
$H(G)$ coincides with the adjacency matrix,
so in this sense,
the Hermitian adjacency matrices can be seen
as a generalization of the ordinary adjacency matrices of undirected graphs.
We generalize this matrix by interpreting the complex value $i$ as $e^{\frac{\pi}{2}i}$.
For $\eta \in \MB{R}$ and a digraph $G$,
the {\it $\eta$-Hermitian adjacency matrix $H_{\eta} = H_{\eta}(G)$} is
the complex matrix indexed by the vertex set $V(G)$, where
\[
(H_{\eta})_{xy} = \begin{cases}
1 \qquad &\text{if $(x,y) \in A(G) \cap A(G)^{-1}$,} \\
e^{\eta i} \qquad &\text{if $(x,y) \in A(G) \setminus A(G)^{-1}$,} \\
e^{-\eta i} \qquad &\text{if $(x,y) \in A(G)^{-1} \setminus A(G)$,} \\
0 \qquad &\text{otherwise.}
\end{cases}
\]
Note that $H_{\frac{\pi}{2}} = H$.

Due to the spectral mapping theorem which will be described in Section 4,
the eigenvalues of $U_{\theta}$ are roughly expressed by
those of a matrix indexed by the vertex set, called discriminant.
In the case of Grover walks,
the discriminant operator corresponds to the transition matrix of the isotropic random walk.
As we will see, the discriminant of our quantum walk is
a certain normalized Hermitian adjacency matrix.
For a digraph $G$,
we define the {\it degree matrix} $D = D(G)$ by $D_{xy} = (\deg_{G}x)\delta_{x,y}$
for vertices $x,y \in V(G)$.
For $\eta \in \MB{R}$,
we define the {\it normalized $\eta$-Hermitian adjacency matrix} $\NH_{\eta}$ by
\[ \NH_{\eta} = D^{-\frac12} H_{\eta} D^{-\frac12}. \]
Note that if a digraph $G$ is $k$-regular,
then $D^{-\frac12} = \frac{1}{\sqrt{k}} I$,
so $\NH_{\eta} = \frac{1}{k} H_{\eta}$.
This implies that the eigenvalues of $\NH_{\eta}$ can be determined by $H_{\eta}$.

The following can be easily confirmed by calculating each components directly,
but this equality is valuable
in the sense of connecting quantum walks to spectral graph theory.

\begin{thm} \label{01}
{\it
Let $\eta \in \MB{R}$ and $G$ be a digraph equipped with the $\eta$-function $\theta$.
Then we have
\[ \NH_{\eta}(G) = KS_{\theta}K^*. \]
}
\end{thm}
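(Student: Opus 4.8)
The plan is to verify the identity entrywise. Fix vertices $x, y \in V(G)$ and expand the matrix product
\[
(KS_\theta K^*)_{xy} = \sum_{a, b \in A(G^{\pm})} K_{x,a}\,(S_\theta)_{ab}\,\overline{K_{y,b}}.
\]
Since $(S_\theta)_{ab} = e^{\theta(b)i}\delta_{a,b^{-1}}$, the sum over $a$ collapses and leaves
\[
(KS_\theta K^*)_{xy} = \sum_{b \in A(G^{\pm})} K_{x,b^{-1}}\,e^{\theta(b)i}\,\overline{K_{y,b}}.
\]

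Next I would substitute the definition of $K$. Because $t(b^{-1}) = o(b)$, and because the entries of $K$ are real (so that $\overline{K_{y,b}} = K_{y,b}$), we have $K_{x,b^{-1}} = \frac{1}{\sqrt{\deg_G o(b)}}\delta_{x, o(b)}$ and $\overline{K_{y,b}} = \frac{1}{\sqrt{\deg_G t(b)}}\delta_{y, t(b)}$. Hence the only term that can survive is the one with $o(b) = x$ and $t(b) = y$, that is $b = (x,y)$; such an arc lies in $A(G^{\pm})$ precisely when $(x,y) \in A(G^{\pm})$, and then it is unique. Therefore
\[
(KS_\theta K^*)_{xy} = \frac{1}{\sqrt{\deg_G x}}\cdot\frac{1}{\sqrt{\deg_G y}}\cdot e^{\theta((x,y))i}
\]
when $(x,y) \in A(G^{\pm})$, and $(KS_\theta K^*)_{xy} = 0$ otherwise (in particular when $x = y$, since digraphs carry no loops).

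It then remains to compare $e^{\theta((x,y))i}$ with $(H_\eta)_{xy}$. Using the definition of the $\eta$-function, $\theta((x,y))$ equals $\eta$, $-\eta$, or $0$ according as $(x,y)$ lies in $A(G)\setminus A(G)^{-1}$, in $A(G)^{-1}\setminus A(G)$, or in $A(G)\cap A(G)^{-1}$; running through these three cases shows $e^{\theta((x,y))i}$ equals $e^{\eta i}$, $e^{-\eta i}$, or $1$ respectively, which is exactly the nonzero value $(H_\eta)_{xy}$ prescribed in each case — and when $(x,y) \notin A(G^{\pm})$ both sides vanish. Finally, the two degree factors are $(D^{-\frac12})_{xx}$ and $(D^{-\frac12})_{yy}$, so the displayed quantity is precisely $(D^{-\frac12} H_\eta D^{-\frac12})_{xy} = (\NH_\eta)_{xy}$, which completes the verification.

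I expect no genuine obstacle here: the content is entirely bookkeeping, as the authors note. The only points that require a little care are tracking $o$ and $t$ under the inverse-arc operation — so that the surviving arc is $b = (x,y)$ rather than $(y,x)$ — and checking that the three-way case split defining $\theta$ matches the three nonzero cases in the definition of $H_\eta$; the treatment of the diagonal $x = y$ and of pairs that are not arcs is then immediate.
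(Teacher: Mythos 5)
Your proposal is correct and follows essentially the same route as the paper: a direct entrywise expansion of $KS_{\theta}K^*$, collapsing the Kronecker delta so that the single surviving arc is $(x,y)$, and then matching the three-way case split of the $\eta$-function against the definition of $H_{\eta}$ (the paper sums out $b$ and keeps $a=(y,x)$ with factor $e^{\theta(a^{-1})i}$, while you sum out $a$ and keep $b=(x,y)$ — the same computation). No gaps.
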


\begin{proof}
Indeed,
\begin{align*}
(KS_{\theta}K^*)_{xy}
&= \sum_{a,b \in A(G^{\pm})}K_{x,a} (S_{\theta})_{ab} K_{y,b} \\
&= \sum_{\substack{a,b \in A(G^{\pm}) \\ t(a) = x \\ t(b) = y}} \frac{1}{\sqrt{\deg x}} \frac{1}{\sqrt{\deg y}}
e^{\theta(b)i} \delta_{a,b^{-1}} \\
&= \sum_{\substack{a \in A(G^{\pm}) \\ t(a) = x \\ o(a) = y}} \frac{1}{\sqrt{\deg x}} \frac{1}{\sqrt{\deg y}} e^{\theta(a^{-1})i} \\
&= \begin{cases}
\frac{e^{\theta((x,y))}}{\sqrt{\deg x}\sqrt{\deg y}} \quad &\text{if $x$ is adjacent to $y$ on $G^{\pm}$,} \\
0 &\text{otherwise.}
\end{cases} \\
&= \begin{cases}
\frac{1}{\sqrt{\deg x}\sqrt{\deg y}} \qquad &\text{if $(x,y) \in A(G) \cap A(G)^{-1}$,} \\
\frac{e^{\eta i}}{\sqrt{\deg x}\sqrt{\deg y}} \qquad &\text{if $(x,y) \in A(G) \setminus A(G)^{-1}$,} \\
\frac{e^{-\eta i}}{\sqrt{\deg x}\sqrt{\deg y}} \qquad &\text{if $(x,y) \in A(G)^{-1} \setminus A(G)$,} \\
0 \qquad &\text{otherwise.}
\end{cases} \\
&= \NH_{\eta}(G)_{xy}.
\end{align*}
\end{proof}

Supplementing a property of the eigenvalues of
the normalized $\eta$-Hermitian adjacency matrix,
we complete this section.
In general,
it is revealed that the eigenvalues of the discriminant are in the closed interval $[-1, 1]$
(See the proof of Proposition~1 in \cite{HKSS2014}),
but we give a proof of this fact from the viewpoint of spectral graph theory.

\begin{lem} \label{02}
{\it
Let $A \in M_n(\MB{C})$ be a complex matrix such that $A^* = A$ and $A^2 = A$.
Then $\Span{f, Af}$ is a real number for any vector $f \in \MB{C}^n$ and
\[ \Span{f,Af} \leq \Span{f,f} \]
holds.
}
\end{lem}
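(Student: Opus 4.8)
The plan is to recognize that the two hypotheses $A^* = A$ and $A^2 = A$ say exactly that $A$ is an orthogonal projection, and to exploit the resulting factorization $A = A^* A$. First I would compute, for an arbitrary vector $f \in \MB{C}^n$,
\[
\langle f, Af \rangle = \langle f, A^2 f \rangle = \langle A^* f, Af \rangle = \langle Af, Af \rangle = \| Af \|^2,
\]
where the second equality uses $A^2 = A$ together with the defining property $\langle Ax, y\rangle = \langle x, A^* y\rangle$ of the adjoint, and the third uses $A^* = A$. Since $\|Af\|^2$ is a nonnegative real number, this already establishes that $\langle f, Af\rangle$ is real (in fact $\langle f, Af\rangle \geq 0$).

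For the inequality, I would apply the very same observation to the complementary operator $B := I - A$. A one-line check gives $B^* = I - A^* = I - A = B$ and $B^2 = I - 2A + A^2 = I - A = B$, so $B$ satisfies the hypotheses of the lemma as well. Hence, by what was just proved, $\langle f, Bf\rangle = \|Bf\|^2 \geq 0$. Expanding $\langle f, Bf\rangle = \langle f, f\rangle - \langle f, Af\rangle$ and rearranging yields $\langle f, Af\rangle \leq \langle f, f\rangle$, which is the asserted bound.

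As for difficulty, there is essentially no obstacle: this is the standard fact that orthogonal projections have operator norm at most $1$. The only points that warrant a little care are keeping the adjoint identity $\langle Ax, y\rangle = \langle x, A^* y\rangle$ straight, so that the computation is valid regardless of which slot of $\langle\,\cdot\,,\,\cdot\,\rangle$ is taken to be conjugate-linear, and observing that $I - A$ inherits both the self-adjointness and the idempotency from $A$ — this last point is precisely what powers the upper bound.
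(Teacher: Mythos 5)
Your proof is correct, and its first half — establishing $\Span{f,Af}=\|Af\|^2\geq 0$ via $A=A^*A$ — is exactly the computation in the paper. For the inequality, however, you take a genuinely different route: the paper decomposes $f=g_0+g_1$ into components in the eigenspaces of $A$ for the eigenvalues $0$ and $1$ (implicitly invoking the spectral theorem for the self-adjoint idempotent $A$) and reads off $\Span{f,f}-\Span{f,Af}=\|g_0\|^2\geq 0$, whereas you observe that $B=I-A$ again satisfies $B^*=B$ and $B^2=B$ and reapply the first computation to get $\Span{f,f}-\Span{f,Af}=\Span{f,Bf}=\|Bf\|^2\geq 0$. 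Your version is slightly more economical and purely algebraic — it needs nothing beyond the identity already proved and the closure of the hypotheses under $A\mapsto I-A$ — while the paper's version makes the geometric picture explicit ($g_0$ is the component of $f$ orthogonal to the range of $A$, and the deficit in the inequality is precisely $\|g_0\|^2$). Both are complete and standard arguments for the fact that an orthogonal projection has norm at most $1$.
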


\begin{proof}
Pick a vector $f \in \MB{C}^n$.
From $A^* = A$ and $A^2 = A$,
\[ ||Af|| = \Span{Af,Af} = \Span{f, A^*Af} = \Span{f, A^2f} = \Span{f, Af}, \]
so $\Span{f, Af}$ is a real number.
Since the eigenvalues of $A$ are only $0,1$,
we can write as $f = g_0 + g_1$,
where $g_i$ is an eigenvector of $i \in \{0,1\}$.
Then we have
\begin{align*}
\Span{f,f} - \Span{f,Af} &= \Span{g_0+g_1, g_0+g_1} - \Span{g_0+g_1, g_1} \\
&= ||g_0||^2 + ||g_1||^2 - ||g_1||^2 \\
&= ||g_0||^2 \geq 0.
\end{align*}
\end{proof}

Remark that the matrix $A$ satisfying the conditions $A^2 = A$ and $A^* = A$ is known
as the orthogonal projection operator in the literature of the functional analysis.

\begin{pro} \label{03}
{\it
Let $\eta \in \MB{R}$ and $G$ be a digraph.
Then for any eigenvalue $\lambda$ of $\NH_{\eta}(G)$,
we have $|\lambda| \leq 1$.
}
\end{pro}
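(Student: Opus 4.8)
The plan is to reduce everything to Lemma~\ref{02} via Theorem~\ref{01}. First I would record that, since $G$ carries the $\eta$-function $\theta$, equation~(\ref{301}) gives $\theta(a)+\theta(a^{-1})=0\in 2\pi\MB{Z}$, so by Lemma~\ref{300} the matrix $S_{\theta}$ is both unitary and self-adjoint; in particular $S_{\theta}^{2}=S_{\theta}S_{\theta}^{*}=I$. By Theorem~\ref{01} we have $\NH_{\eta}(G)=KS_{\theta}K^{*}$, and this matrix is Hermitian (its adjoint is $KS_{\theta}^{*}K^{*}=KS_{\theta}K^{*}$), so every eigenvalue $\lambda$ is real. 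This is the only place where Hermiticity is needed, and it lets us argue with the single real number $\lambda$ rather than its real and imaginary parts.

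Next I would introduce the two candidate projections $P_{\pm}=\tfrac12(I\pm S_{\theta})$. Using $S_{\theta}^{*}=S_{\theta}$ one checks $P_{\pm}^{*}=P_{\pm}$, and using $S_{\theta}^{2}=I$ one checks
\[
P_{\pm}^{2}=\tfrac14\bigl(I\pm 2S_{\theta}+S_{\theta}^{2}\bigr)=\tfrac14\bigl(2I\pm 2S_{\theta}\bigr)=P_{\pm},
\]
so $P_{+}$ and $P_{-}$ satisfy the hypotheses of Lemma~\ref{02}. Now let $\lambda$ be an eigenvalue of $\NH_{\eta}(G)$ with eigenvector $f$ normalized so that $\Span{f,f}=1$, and set $g=K^{*}f$. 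Since $KK^{*}=I$, we get $\Span{g,g}=\Span{f,KK^{*}f}=1$. Moreover
\[
\Span{g,S_{\theta}g}=\Span{K^{*}f,S_{\theta}K^{*}f}=\Span{f,KS_{\theta}K^{*}f}=\Span{f,\NH_{\eta}(G)f}=\lambda .
\]
Applying Lemma~\ref{02} with $A=P_{+}$ and the vector $g$ yields $\Span{g,P_{+}g}\le\Span{g,g}=1$, i.e. $\tfrac12(1+\lambda)\le 1$, hence $\lambda\le 1$. Applying it with $A=P_{-}$ gives $\tfrac12(1-\lambda)\le 1$, hence $\lambda\ge -1$. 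Therefore $|\lambda|\le 1$, which is the claim.

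I do not expect a genuine obstacle here: all the analytic content has been packaged into Lemma~\ref{02}, and what remains is the bookkeeping identities $KK^{*}=I$, $S_{\theta}^{2}=I$, and the rewriting $\Span{g,S_{\theta}g}=\lambda$. The only point that requires a little care is making sure that the $\eta$-function really does make $S_{\theta}$ self-adjoint (so that $\NH_{\eta}$ is Hermitian and $\lambda$ is real), which is exactly what (\ref{301}) together with Lemma~\ref{300}(ii) provides. As an alternative one could skip the projections and simply note that $\lambda=\Span{g,S_{\theta}g}$ with $\|g\|=1$ and $S_{\theta}$ unitary, so $|\lambda|\le\|g\|\,\|S_{\theta}g\|=1$ by the Cauchy--Schwarz inequality; I would nonetheless present the Lemma~\ref{02} version in the paper since it keeps the argument within the spectral-graph-theoretic framework emphasized just above the statement.
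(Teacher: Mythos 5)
Your proof is correct, but it applies Lemma~\ref{02} to a different pair of projections than the paper does, so the two arguments are genuinely different. The paper takes $A=K^{*}K$ (self-adjoint and idempotent because $KK^{*}=I$) and bounds the norm directly: $|\lambda|^{2}\|f\|^{2}=\|KS_{\theta}K^{*}f\|^{2}=\Span{S_{\theta}K^{*}f,K^{*}KS_{\theta}K^{*}f}\leq\|S_{\theta}K^{*}f\|^{2}=\|K^{*}f\|^{2}=\|f\|^{2}$, using only that $S_{\theta}$ is unitary and $KK^{*}=I$; in particular it never needs to know that $\lambda$ is real, since it estimates $|\lambda|^{2}$ from the outset. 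You instead first invoke self-adjointness of $S_{\theta}$ (hence Hermiticity of $\NH_{\eta}$) to make $\lambda$ real, and then feed the projections $P_{\pm}=\tfrac12(I\pm S_{\theta})$ into Lemma~\ref{02} to get the two one-sided bounds $\lambda\leq 1$ and $\lambda\geq -1$ separately. What your route buys is the standard numerical-range picture ($\lambda=\Span{g,S_{\theta}g}$ with $\|g\|=1$) and the cleaner split into upper and lower estimates; what it costs is the extra hypothesis that $S_{\theta}$ is an involution, which the paper's argument does not use. Your closing remark is also right: the Cauchy--Schwarz one-liner $|\lambda|=|\Span{g,S_{\theta}g}|\leq\|g\|\,\|S_{\theta}g\|=1$ is the most economical proof of all and needs neither Lemma~\ref{02} nor self-adjointness of $S_{\theta}$, only unitarity and $KK^{*}=I$. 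All three arguments are sound; there is no gap in your proposal.
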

\begin{proof}
Let $f$ be an eigenvector of $\NH_{\eta} = \NH_{\eta}(G)$ of an eigenvalue $\lambda$.
Then
\begin{align*}
|\lambda|^2 ||f||^2 &= || \NH_{\eta} f ||^2 \\
&= || KS_{\theta}K^* f ||^2 \tag{by Theorem~\ref{01}} \\
&= \Span{KS_{\theta}K^*f, KS_{\theta}K^*f} \\
&= \Span{S_{\theta}K^*f, K^*KS_{\theta}K^*f} \\
&\leq \Span{S_{\theta}K^*f, S_{\theta}K^*f} \tag{by Lemma~\ref{02}} \\
&= \Span{K^*f, K^*f} \tag{$S_{\theta}$ is unitary} \\
&= \Span{f, KK^*f} \\
&= \Span{f, f} \tag{by $KK^* = I$} \\
&= ||f||^2,
\end{align*}
so we have the desired inequality.
\end{proof}

\section{Spectral mapping theorem}

The eigenvalues of the normalized $\eta$-Hermitian adjacency matrix are real,
and they are in the closed interval $[-1,1] \subset \MB{R}$ from Proposition~\ref{03}.
In fact, most eigenvalues of the transfer matrix $U_{\theta}$
are values obtained by lifting the eigenvalues of $\NH_{\eta}$ onto the unit circle in the complex plane.
Such facts are often called spectral mapping theorems,
and well-studied in quantum walks on graphs (see e.g., \cite{KSS2019} and its references therein).
In 2014,
Higuchi, Konno, Sato and Segawa \cite{HKSS2014} introduced certain quantum walks,
which are called {\it twisted Szegedy walks},
and revealed a spectral mapping theorem
derived from twisted random walks.
In this section,
we confirm that a spectral mapping theorem holds for our quantum walk via twisted Szegedy walks.

For a digraph $G$,
a {\it path} of $G$ is a sequence of arcs $(a_1, a_2, \dots, a_r)$
with $t(a_j) = o(a_{j+1})$ for $j = 1, \dots, r-1$.
If $t(a_r) = o(a_1)$, then it is said to be {\it closed}.
We denote the set of all closed paths of $G$ by $\MC{C}(G)$.
A function $A(G^{\pm}) \to (0,1]$ is called a {\it transition probability} if
\[ \sum_{\substack{a \in A(G^{\pm}) \\ o(a) = u}} p(a) = 1 \] for all $u \in V(G)$.
A transition probability $p$ is said to be {\it reversible}
if there exists a positive valued function $m : V(G) \to (0, \infty)$ such that
\[ m(o(a))p(a) = m(t(a))p(a^{-1}) \]
for all $a \in A(G^{\pm})$.

\begin{lem} \label{91}
{\it
Let $G$ be a digraph.
Then the transition probability $p$ defined by
\[ p(a) = \frac{1}{\deg_{G^{\pm}}o(a)} \]
is reversible.
}
\end{lem}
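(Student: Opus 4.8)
The plan is to exhibit an explicit reversing measure and then verify the detailed balance equation by a one-line computation. First I would record the two identities I will use: since $o(a^{-1}) = t(a)$ for every $a \in A(G^{\pm})$, the proposed function satisfies
\[
p(a) = \frac{1}{\deg_{G^{\pm}}o(a)}, \qquad p(a^{-1}) = \frac{1}{\deg_{G^{\pm}}t(a)}.
\]
Although the statement only asks for reversibility, it is worth first confirming that $p$ is genuinely a transition probability in the sense defined just above: it takes values in $(0,1]$ because $\deg_{G^{\pm}}u \geq 1$ for every vertex $u$, and for a fixed $u$ there are exactly $\deg_{G^{\pm}}u$ arcs of $A(G^{\pm})$ with origin $u$, so $\sum_{o(a)=u} p(a) = \deg_{G^{\pm}}u / \deg_{G^{\pm}}u = 1$.

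Next I would take as candidate $m(v) = \deg_{G^{\pm}}v$ for $v \in V(G)$. This is a strictly positive function: by the standing weak-connectivity assumption $G^{\pm}$ is connected, hence (in any non-trivial case) has no isolated vertices, so $m(v) \geq 1$ for all $v$. Finally, for an arbitrary arc $a \in A(G^{\pm})$ one computes
\[
m(o(a))\,p(a) = \deg_{G^{\pm}}o(a)\cdot\frac{1}{\deg_{G^{\pm}}o(a)} = 1 = \deg_{G^{\pm}}t(a)\cdot\frac{1}{\deg_{G^{\pm}}t(a)} = m(t(a))\,p(a^{-1}),
\]
which is exactly the reversibility condition, completing the argument.

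There is essentially no obstacle: the content is just that $p$ is the transition probability of the isotropic (simple) random walk on the underlying graph $G^{\pm}$, for which the degree is the well-known reversing measure. The only point deserving a moment's care is the strict positivity of $m$, and this is precisely where the assumption that digraphs are weakly connected (so that $G^{\pm}$ has no isolated vertices) enters.
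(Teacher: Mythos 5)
Your proof is correct and uses exactly the same reversing measure $m(u) = \deg_{G^{\pm}}u$ and the same one-line verification $m(o(a))p(a) = 1 = m(t(a))p(a^{-1})$ as the paper. The extra checks (that $p$ is a transition probability and that $m$ is positive) are harmless additions but do not change the argument.
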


\begin{proof}
Considering the positive valued function $m$ defined by
\[ m(u) = \deg_{G^{\pm}}u, \]
we can check $m(o(a))p(a) = 1 = m(t(a))p(a^{-1})$ for any $a \in A(G)^{\pm 1}$.
\end{proof}

\subsection{Twisted Szegedy walks}

In this subsection,
we review twisted Szegedy walks.
Let $G$ be a finite simple and connected graph.
A function $w : A(G) \to \MB{C} \setminus \{0\}$ is a {\it weight function} if
\[ \sum_{\substack{a \in A(G)\\ o(a) = x}} |w(a)|^2 = 1 \]
for any $x \in V(G)$.
A function $\theta : A(G) \to \MB{R}$ is a {\it 1-form function} if
\[ \theta(a^{-1}) = - \theta(a) \]
for any $a \in A(G)$.
The operators (matrices) related to twisted Szegedy walks are summarized in Table~\ref{320}.

\begin{table}[H]
  \centering
  \begin{tabular}{|c|c|c|c|}
\hline
Notation & Name & Indices of rows and columns & Definition \\
\hline
\hline
$L$ & Boundary & $V(G) \times A(G)$ & $L_{v,a} = w(a) \delta_{v, o(a)}$ \\
\hline
$C^{(w)}$ & Coin & $A(G) \times A(G)$ &$ C^{(w)} = 2L^*L - I$ \\
\hline
$S^{(\theta)}$ & Twisted shift & $A(G) \times A(G)$  & $S^{(\theta)}_{ab} = e^{\theta(b)i}\delta_{a,b^{-1}}$ \\
\hline
$U^{(w, \theta)}$ & Time evolution & $A(G) \times A(G)$  & $U^{(w, \theta)} = S^{(\theta)} C^{(w)}$ \\
\hline
$T^{(w, \theta)}$ & Discriminant & $V(G) \times V(G)$ & $T^{(w, \theta)} = L S^{(\theta)} L^*$ \\ \hline
\end{tabular}
 \caption{The operators related to twisted Szegedy walks} \label{320}
\end{table}

Note that the twisted shift operator $S^{(\theta)}$ is unitary and self-adjoint.

Let $\varphi$ be a function from the unit circle $\{z \in \MB{C} \mid |z| = 1\}$
to the closed interval $[-1,1] \subset \MB{R}$, where $\varphi(z) = (z+z^{-1})/2$.

\begin{thm}[\cite{HKSS2014}] \label{310}
{\it
Let $G$ be a graph equipped with a weight function $w$ and a $1$-form function $\theta$.
Denote $m_{\pm 1}$ by the multiplicities of the eigenvalues $\pm 1$ of $T^{(w, \theta)}$,
respectively.
Then we have
\[ \Spec(U^{(w, \theta)}) = \varphi^{-1}(\Spec(T^{(w, \theta)}))
\cup \{1\}^{M_1} \cup \{-1\}^{M_{-1}}, \]
where $M_{\varepsilon} = \max\{0, |E(G)| - |V(G)| + m_{\varepsilon}\}$
for $\varepsilon \in \{\pm 1\}$.
}
\end{thm}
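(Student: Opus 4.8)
The plan is to recognize this as an instance of the abstract spectral‑mapping scheme behind \cite{HKSS2014}: I would rewrite $U^{(w,\theta)}$ in the form $S(2dd^{*}-I)$ with $d$ an isometry and $S$ a unitary self‑adjoint operator, and then read off its spectrum from the self‑adjoint discriminant $T=d^{*}Sd$. First I would set $d:=L^{*}$. Since $w$ is a weight function, $LL^{*}=I$ on $\ell^{2}(V(G))$, so $d^{*}d=I$ (that is, $d$ is an isometry), while $dd^{*}=L^{*}L$ is the orthogonal projection onto $\mathcal{L}:=\operatorname{Im}(d)$, a subspace of dimension $|V(G)|$. Writing $S:=S^{(\theta)}$, which is a unitary involution because $\theta$ is a $1$‑form function (by the argument of Lemma~\ref{300}), one has $C^{(w)}=2dd^{*}-I$, hence $U:=U^{(w,\theta)}=S(2dd^{*}-I)$ and $T:=T^{(w,\theta)}=d^{*}Sd$; this $T$ is self‑adjoint, so it has real spectrum and an orthonormal eigenbasis, and it satisfies $\|T\|\le 1$ by the projection estimate used for Proposition~\ref{03} (with $K,S_{\theta}$ replaced by $L,S^{(\theta)}$). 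The dimension to be accounted for is $\dim\ell^{2}(A(G))=|A(G)|=2|E(G)|$.

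Next I would extract the spectrum away from $\pm 1$. For an eigenvalue $\mu$ of $T$ with eigenspace $E_{\mu}$ of dimension $m_{\mu}$, put $\widetilde{E}_{\mu}:=dE_{\mu}+SdE_{\mu}$. The identity $\langle d\psi,Sd\psi'\rangle=\langle\psi,T\psi'\rangle$ shows that the $\widetilde{E}_{\mu}$ are mutually orthogonal over distinct $\mu$ and that $\mathcal{L}+S\mathcal{L}=\bigoplus_{\mu}\widetilde{E}_{\mu}$; a Gram‑matrix computation then gives $\dim\widetilde{E}_{\mu}=2m_{\mu}$ when $|\mu|<1$, whereas $Sd\psi=\mu\,d\psi$ and $\dim\widetilde{E}_{\mu}=m_{\mu}$ when $\mu=\pm 1$. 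Each $\widetilde{E}_{\mu}$ is $U$‑invariant, because $Ud\psi=Sd\psi$ and $U(Sd\psi)=2\mu\,Sd\psi-d\psi$; for $|\mu|<1$ the operator $U|_{\widetilde{E}_{\mu}}$ is, in the basis $\{d\psi_{i}\}\cup\{Sd\psi_{i}\}$, a direct sum of $m_{\mu}$ copies of a $2\times 2$ block with characteristic polynomial $t^{2}-2\mu t+1$, whose two roots are exactly the pair $\varphi^{-1}(\mu)$ on the unit circle, while for $\mu=\pm 1$ one has $U|_{\widetilde{E}_{\mu}}=\pm I$. Thus $\mathcal{L}+S\mathcal{L}$ contributes precisely $\varphi^{-1}(\Spec(T))$ to $\Spec(U)$. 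On the orthogonal complement $W:=(\mathcal{L}+S\mathcal{L})^{\perp}$ — which is $U$‑invariant as the orthocomplement of an invariant subspace of a unitary — one has $dd^{*}=0$, so $U|_{W}=-S|_{W}$, a unitary involution that carries only the eigenvalues $\pm 1$.

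It then remains to count those leftover $\pm 1$ eigenvalues and to match them against the claimed $M_{\varepsilon}$. The crux is the kernel identity
\[
\ker(U-\varepsilon I)=d\ker(T-\varepsilon I)\ \oplus\ \bigl(\ker(S+\varepsilon I)\cap\ker d^{*}\bigr)\qquad(\varepsilon=\pm 1),
\]
which I would prove by rewriting $Uv=\varepsilon v$ as $2dd^{*}v=(I+\varepsilon S)v$, applying $I-\varepsilon S$ to force $dd^{*}v\in\ker(S-\varepsilon I)\cap\mathcal{L}$, and then reading off the $\ker d^{*}$‑component, together with
\[
\ker(S-\varepsilon I)\cap\mathcal{L}=d\ker(T-\varepsilon I),
\]
which holds because the Cauchy--Schwarz equality $\langle Sd\psi,d\psi\rangle=\langle T\psi,\psi\rangle=\varepsilon\|d\psi\|^{2}$ forces $Sd\psi=\varepsilon\,d\psi$ for $\psi\in\ker(T-\varepsilon I)$, while conversely $Sd\xi=\varepsilon\,d\xi$ gives $T\xi=d^{*}Sd\xi=\varepsilon\xi$. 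Granting both, $(\ker(S+\varepsilon I)+\ker d^{*})^{\perp}=\ker(S-\varepsilon I)\cap\mathcal{L}$ has dimension $m_{\varepsilon}$, so $\dim(\ker(S+\varepsilon I)+\ker d^{*})=2|E(G)|-m_{\varepsilon}$; feeding $\dim\ker(S\pm I)=|E(G)|$ and $\dim\ker d^{*}=2|E(G)|-|V(G)|$ into inclusion--exclusion gives $\dim(\ker(S+\varepsilon I)\cap\ker d^{*})=|E(G)|-|V(G)|+m_{\varepsilon}$, which is therefore automatically nonnegative and coincides with $M_{\varepsilon}$. Assembling the contributions of the $\widetilde{E}_{\mu}$ (which account for $\varphi^{-1}(\Spec(T))$) and of $W$ (which accounts for the extra $\{1\}^{M_{1}}\cup\{-1\}^{M_{-1}}$) then recovers the total $2|E(G)|$ and yields the asserted decomposition of $\Spec(U)$.

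I expect the main obstacle to be exactly this last bookkeeping: establishing the orthogonal‑complement identities, pinning down $\ker(S-\varepsilon I)\cap\mathcal{L}$, and verifying that the $2\times 2$ blocks over $|\mu|<1$, the $\pm I$ blocks over $\mu=\pm 1$, and the piece $-S|_{W}$ are genuinely complementary and together exhaust $\ell^{2}(A(G))$ — all while correctly tracking the function $\theta$, which enters only through the eigenstructure of $S^{(\theta)}$.
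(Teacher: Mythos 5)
Your argument is correct and is essentially the standard Szegedy spectral-mapping proof: the paper itself does not prove Theorem~\ref{310} but imports it from \cite{HKSS2014}, which uses the same decomposition of $\ell^{2}(A(G))$ into the two-dimensional $U$-invariant blocks spanned by $L^{*}\psi$ and $S^{(\theta)}L^{*}\psi$ over eigenvectors $\psi$ of $T^{(w,\theta)}$, plus the ``birth'' eigenspaces $\ker(S^{(\theta)}\pm I)\cap\ker L$ carrying the extra $\mp 1$'s. One small bonus of your bookkeeping is worth noting: since $\dim\bigl(\ker(S^{(\theta)}+\varepsilon I)\cap\ker L\bigr)=|E(G)|-|V(G)|+m_{\varepsilon}$ is itself a dimension, that quantity is automatically nonnegative, so the $\max$ in the definition of $M_{\varepsilon}$ is always achieved by its second argument.
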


\subsection{Spectral mapping theorem for our quantum walk}

The function $w_0 : A(G) \to \MB{C} \setminus \{0\}$ defined by
\[ w_0(a) = \frac{1}{\sqrt{\deg o(a)}} \]
is clearly a weight function,
and any $\eta$-function $\theta$ is a $1$-form function by (\ref{301}).

\begin{thm} \label{21}
{\it
Let $G$ be a digraph equipped with an $\eta$-function $\theta$ and
$U_{\theta}$ be the transfer matrix.
Denote $m_{\pm 1}$ by the multiplicities of the eigenvalues $\pm 1$ of $\NH_{\eta}(G)$,
respectively.
Then we have
\begin{equation} \label{a30}
\Spec(U_{\theta}) = \varphi^{-1}(\Spec(\NH_{\eta}(G)))
\cup \{1\}^{M_1} \cup \{-1\}^{M_{-1}},
\end{equation}
where $M_{\varepsilon} = \max\{0, |E(G^{\pm})| - |V(G^{\pm})| + m_{\varepsilon}\}$
for $\varepsilon \in \{\pm 1\}$.
}
\end{thm}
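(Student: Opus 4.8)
The plan is to deduce Theorem~\ref{21} directly from Theorem~\ref{310} by exhibiting our quantum walk as a particular twisted Szegedy walk. First I would observe that a digraph $G$ equipped with its $\eta$-function $\theta$ gives rise to the underlying graph $G^{\pm}$, and on $G^{\pm}$ we have the canonical weight function $w_0(a) = 1/\sqrt{\deg o(a)}$ and the $1$-form function $\theta$ (the latter by (\ref{301})). So it suffices to check that, under this choice of data, the twisted Szegedy operators of Table~\ref{320} coincide with the operators of Table~\ref{1000} after the cosmetic identification of indexing $A(G^{\pm})$ with itself.

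The key step is a dictionary between the two tables. The boundary matrices $K$ and $L$ are indexed differently: $K_{v,a}$ uses $\delta_{v,t(a)}$ with normalization $1/\sqrt{\deg t(a)}$, while $L_{v,a}$ uses $\delta_{v,o(a)}$ with $w_0(a) = 1/\sqrt{\deg o(a)}$. These are related by the arc-reversal permutation $a \mapsto a^{-1}$ on $A(G^{\pm})$, which swaps $o$ and $t$; concretely, if $P$ denotes the permutation matrix with $P_{ab} = \delta_{a,b^{-1}}$, then $K = L P$ (one checks $(LP)_{v,a} = \sum_b L_{v,b}\delta_{b,a^{-1}} = w_0(a^{-1})\delta_{v,o(a^{-1})} = \frac{1}{\sqrt{\deg t(a)}}\delta_{v,t(a)} = K_{v,a}$). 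Since $P$ is a real symmetric involution, $P^* = P = P^{-1}$, so the coin operators satisfy $C = 2K^*K - I = P(2L^*L - I)P = P\,C^{(w_0)}\,P$. For the shift operators, note that both $S_\theta$ and $S^{(\theta)}$ have the same formula $e^{\theta(b)i}\delta_{a,b^{-1}}$, i.e.\ $S_\theta = S^{(\theta)}$; moreover using $\theta(a^{-1}) = -\theta(a)$ one verifies $P S_\theta P = S_\theta$ as well (the entry becomes $e^{\theta(a^{-1})i}\delta_{a,b^{-1}}$, which after the reversal of $b$ in the phase gives back $S_\theta$, though this is not even needed). It follows that
\[
U_\theta = S_\theta C = S^{(\theta)} P\, C^{(w_0)}\, P = P \big( P S^{(\theta)} P \big) C^{(w_0)} P = P\, S^{(\theta)} C^{(w_0)}\, P = P\, U^{(w_0,\theta)} P,
\]
using $P S^{(\theta)} P = S^{(\theta)}$, so $U_\theta$ and $U^{(w_0,\theta)}$ are conjugate by a permutation matrix and hence have the same spectrum with multiplicities. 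Similarly $T_\theta = K S_\theta K^* = L P S^{(\theta)} P L^* = L S^{(\theta)} L^* = T^{(w_0,\theta)}$, and by Theorem~\ref{01} this common discriminant is exactly $\NH_\eta(G)$, so $m_{\pm 1}$ matches on both sides. Finally $|E(G^{\pm})| = |A(G^{\pm})|/2$ and $|V(G^{\pm})| = |V(G)|$, so the counts $M_\varepsilon$ in Theorem~\ref{310} are precisely those in (\ref{a30}).

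Assembling these identifications, Theorem~\ref{310} applied to $(G^{\pm}, w_0, \theta)$ yields $\Spec(U^{(w_0,\theta)}) = \varphi^{-1}(\Spec(T^{(w_0,\theta)})) \cup \{1\}^{M_1} \cup \{-1\}^{M_{-1}}$, and translating through $U_\theta = P U^{(w_0,\theta)} P$ and $T^{(w_0,\theta)} = \NH_\eta(G)$ gives (\ref{a30}). The main obstacle I anticipate is purely bookkeeping: being careful that the hypotheses of Theorem~\ref{310} are genuinely met — in particular that $w_0$ is a weight function (it is, since $\sum_{o(a)=x} |w_0(a)|^2 = \deg(x)/\deg(x) = 1$) and that $\theta$ is a $1$-form function on $G^{\pm}$ (it is, by (\ref{301})) — and that the index-set identification between $A(G^{\pm})$ as used for $K$ versus as used for $L$ is handled consistently via the reversal permutation, so that no spurious change in eigenvalue multiplicities is introduced. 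Since conjugation by a permutation matrix preserves the full spectrum including multiplicities, this causes no real difficulty; the proof is essentially a verification that "our walk on $G^{\pm}$ is the twisted Szegedy walk with the degree weight."
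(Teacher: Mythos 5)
Your argument is, up to notation, exactly the paper's proof: your permutation matrix $P$ (with $P_{ab}=\delta_{a,b^{-1}}$) is precisely the shift $S(G^{\pm})$, your identity $K=LP$ is the paper's item (i) $L=K(G^{\pm})S(G^{\pm})$, your $C=P\,C^{(w_0)}P$ is the paper's computation $2L^*L-I=S(G^{\pm})C(G^{\pm})S(G^{\pm})$, and both proofs then conjugate $U^{(w_0,\theta)}$ into $U_{\theta}$ and identify the common discriminant with $\NH_{\eta}(G)$ before invoking Theorem~\ref{310}. So there is no methodological difference to report.

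There is, however, one step that is literally false, and it is the same step the paper asserts as its item (iii): the claim $PS_{\theta}P=S_{\theta}$. Computing directly, $(PS_{\theta}P)_{ab}=(S_{\theta})_{a^{-1},b^{-1}}=e^{\theta(b^{-1})i}\delta_{a^{-1},b}=e^{-\theta(b)i}\delta_{a,b^{-1}}$, so in fact $PS_{\theta}P=S_{-\theta}$, which differs from $S_{\theta}$ unless $\theta$ takes values in $\pi\MB{Z}$ (for $\eta=\pi/2$ the conjugation visibly flips $\pm i$ in the example of Section~2). Your parenthetical verification misattributes the phase: the surviving entry carries $e^{\theta(b^{-1})i}$, not $e^{\theta(a^{-1})i}$. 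The damage is easily contained. With the corrected identity one gets $U_{\theta}=S_{\theta}PC^{(w_0)}P=P(PS_{\theta}P)C^{(w_0)}P=P\,U^{(w_0,-\theta)}P$ and $T^{(w_0,-\theta)}=K(PS_{-\theta}P)K^*=KS_{\theta}K^*=\NH_{\eta}(G)$; since $-\theta$ is still a $1$-form function, Theorem~\ref{310} applied to the pair $(w_0,-\theta)$ yields (\ref{a30}) verbatim. (Alternatively, $U^{(w_0,\theta)}=P\,\overline{U_{\theta}}\,P$ and $T^{(w_0,\theta)}=\overline{\NH_{\eta}(G)}$, and since the right-hand side of (\ref{a30}) is invariant under complex conjugation and $\overline{\NH_{\eta}(G)}$ has the same real spectrum and multiplicities $m_{\pm 1}$ as $\NH_{\eta}(G)$, the conclusion also survives that way.) You should make one of these repairs explicit rather than relying on $PS^{(\theta)}P=S^{(\theta)}$.
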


\begin{proof}
We consider the twisted Szegedy walk on $G^{\pm}$ associated with $w_0$ and $\theta$.
By Theorem~\ref{310},
it is enough to show that $\NH_{\eta}(G) = T^{(w_0, \theta)}$ and
$U_{\theta}$ is similar to $U^{(w_0, \theta)}$.
The following equalities can be checked immediately:
\begin{enumerate}[(i)]
\item $L = K(G^{\pm}) S(G^{\pm})$;
\item $S^{(\theta)} = S_{\theta}(G^{\pm})$;
\item $S(G^{\pm}) S_{\theta}(G^{\pm}) S(G^{\pm}) = S_{\theta}(G^{\pm})$.
\end{enumerate}
Thus, as for the discriminant operators, we have
\begin{align*}
T^{(w_0, \theta)} &= L S^{(\theta)} L^* \\
&= K(G^{\pm}) S(G^{\pm}) S_{\theta}(G^{\pm}) S(G^{\pm}) K^*(G^{\pm}) \tag{by (i) and (ii)} \\
&= K(G^{\pm}) S_{\theta}(G^{\pm}) K^*(G^{\pm}) \tag{by (iii)} \\
&= \NH_{\eta}(G). \tag{by Theorem~\ref{01}}
\end{align*}
In addition,
\begin{align*}
2L^*L - I &= 2S(G^{\pm})K^*(G^{\pm})K(G^{\pm})S(G^{\pm}) - S(G^{\pm})S(G^{\pm}) \tag{by (i)} \\
&= S(G^{\pm}) (2K^*(G^{\pm})K(G^{\pm}) - I)S(G^{\pm}) \\
&= S(G^{\pm}) C(G^{\pm}) S(G^{\pm}),
\end{align*}
so we have
\begin{align*}
U^{(w_0, \theta)} &= S^{(\theta)} (2L^*L-I) \\
&= S_{\theta}(G^{\pm}) S(G^{\pm}) C(G^{\pm}) S(G^{\pm}) \tag{by (ii)} \\
&= S(G^{\pm})S(G^{\pm}) S_{\theta}(G^{\pm}) S(G^{\pm}) C(G^{\pm}) S(G^{\pm}) \\
&= S(G^{\pm})S_{\theta}(G^{\pm})C(G^{\pm}) S(G^{\pm})  \tag{by (iii)} \\
&= S(G^{\pm}) U_{\theta} S(G^{\pm}).
\end{align*}
We have the desired statement.
\end{proof}

Theorem~\ref{21} implies that
all the spectral information of the normalized $\eta$-Hermitian adjacency matrix
corresponding to a continuous-time random walks can be reproducted.
Moreover,
we will see that from the second and third terms of RHS of (\ref{a30}) in Theorem~\ref{21},
a part of information of all the cycles in given graphs can be used for the identification of digraphs.

In \cite{HKSS2014},
the values $m_{\pm 1}$ are also determined
with a general weight function and a 1-form function.
In the quantum walks we consider,
the situations of case division can be summarized slightly simpler.

Fix a digraph $G$ with an $\eta$-function $\theta$.
For a function $f : A(G^{\pm}) \to \MB{C}$ and a closed path $c = (a_1, \dots, a_r)$,
we define
\[ \int_{c} \arg f = \sum_{j=1}^r \left\{ \arg(f(a_j)) - \arg(f(a_j^{-1})) \right\}. \]
In determining $m_{\pm 1}$,
the value
\[ \int_c \arg \tilde{w} \]
in \cite{HKSS2014} is the key.
In our situation,
the modified weight function $\tilde{w} : A(G^{\pm}) \to \MB{C}$
is nothing but the one defined by
\[ \tilde{w}(a) = \frac{1}{\sqrt{\deg o(a)}} e^{i \theta(a) / 2}, \]
so for a closed path $c = (a_1, \dots, a_r)$, we have
\begin{align*}
\int_c \arg \tilde{w}
&= \sum_{j=1}^r \left\{ \arg(\tilde{w}(a_j)) - \arg(\tilde{w}(a_j^{-1})) \right\} \\
&= \sum_{j=1}^r \left\{ \frac{\theta(a_j)}{2} - \frac{\theta(a_j^{-1})}{2} \right\} \\
&= \sum_{j=1}^r \theta(a_j).
\end{align*}
Again,
for a digraph $G$ equipped with an $\eta$-function $\theta$,
we define the function $\MC{I} : \MC{C}(G^{\pm}) \to \MB{C}$ by
\[ \MC{I}(c) = \sum_{j=1}^r \theta(a_j) \]
for a closed path $c = (a_1, \dots, a_r)$.
We consider four situations:
\begin{enumerate}[(i)]
\item $G^{\pm}$ is bipartite and $\MC{I}(c) \in 2\pi\MB{Z}$ for any closed path $c$;
\item $G^{\pm}$ is non-bipartite and $\MC{I}(c) \in 2\pi\MB{Z}$ for any closed path $c$;
\item $G^{\pm}$ is non-bipartite and for any closed path $c$,
\[ \MC{I}(c) \in
\begin{cases}
2\pi\MB{Z} \quad &\text{if $c$ is an even length closed path,} \\
2\pi(\MB{Z} + \frac12) \quad &\text{if $c$ is an odd length closed path;}
\end{cases} \]
\item otherwise.
\end{enumerate}
Then, Lemma~2 in \cite{HKSS2014} can be rewritten as follows.

\begin{pro} \label{11}
{\it
Let $G$ be a digraph equipped with an $\eta$-function $\theta$.
Denote $m_{\pm 1}$ by the multiplicities of eigenvalues $\pm 1$ of $\NH_{\eta}(G)$,
respectively.
Then we have
\[ (m_1, m_{-1}) =
\begin{cases}
(1,1) \quad &\text{case (i),} \\
(1,0) \quad &\text{case (ii),} \\
(0,1) \quad &\text{case (iii),} \\
(0,0) \quad &\text{case (iv).}
\end{cases}
\]
}
\end{pro}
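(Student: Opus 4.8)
The plan is to obtain the proposition as a transcription of Lemma~2 in \cite{HKSS2014}, applied to the twisted Szegedy walk on $G^{\pm}$ determined by the weight function $w_0$ and the $1$-form function $\theta$. Two facts established earlier will be reused verbatim: first, $\NH_{\eta}(G) = T^{(w_0,\theta)}$ (this is part of the proof of Theorem~\ref{21}), so the integers $m_{\pm 1}$ in the statement are literally the multiplicities of $\pm 1$ as eigenvalues of the discriminant considered in \cite{HKSS2014}; second, $\int_c \arg \tilde w = \MC{I}(c)$ for every closed path $c \in \MC{C}(G^{\pm})$, by the computation carried out just above the statement. Consequently everything reduces to checking that the bipartiteness dichotomy together with the four numerical conditions (i)--(iv), phrased in terms of $\MC{I}$, coincide with the hypotheses under which \cite{HKSS2014} records each of the four values of $(m_1,m_{-1})$.

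I would carry out this identification by tracking the two holonomy conditions separately, which also explains why $m_{\pm 1} \in \{0,1\}$ always (so that only the four pairs listed can occur). The eigenvalue $1$ of $\NH_{\eta}$ is present, necessarily with multiplicity $1$ by a Perron-type argument, exactly when $\MC{I}(c) \in 2\pi\MB{Z}$ for every closed path $c$; equivalently, when $H_{\eta}(G)$ is switching equivalent to the adjacency matrix of $G^{\pm}$, in which case $\NH_{\eta}$ is unitarily equivalent to the normalized adjacency matrix. This singles out cases (i) and (ii) and forces $m_1 = 0$ in cases (iii) and (iv). For the eigenvalue $-1$ one repeats the argument with $H_{\eta}$ replaced by $-H_{\eta}$: the holonomy of $-H_{\eta}$ around a closed path of length $\ell$ is $e^{i(\MC{I}(c)+\pi\ell)}$, so $m_{-1} = 1$ precisely when $\MC{I}(c) \in 2\pi\MB{Z}$ for all even closed paths and $\MC{I}(c) \in 2\pi(\MB{Z}+\tfrac12)$ for all odd closed paths. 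If $G^{\pm}$ is bipartite there are no odd closed paths and this collapses to the condition for $m_1$, giving $(m_1,m_{-1}) = (1,1)$ in case (i) and $(0,0)$ in case (iv); if $G^{\pm}$ is non-bipartite the same condition separates case (ii) with $(1,0)$, case (iii) with $(0,1)$, and case (iv) with $(0,0)$. Assembling the four outcomes yields the asserted table.

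I expect the only delicate point to be the bookkeeping needed to see that (i)--(iv) genuinely exhaust and separate the possibilities: one must check that (i)--(iii) are pairwise disjoint --- they are distinguished by whether $G^{\pm}$ is bipartite and, in the non-bipartite case, by the behaviour of $\MC{I}$ on odd versus even closed paths --- and that (iv) is by definition their complement, so that no configuration is omitted or double counted. A related routine verification is that it suffices to test the conditions on $\MC{I}$ on a generating set of the cycle space rather than on all of $\MC{C}(G^{\pm})$: since $\MC{I}$ is additive under concatenation of closed paths and unchanged modulo $2\pi$ by insertion of a backtrack, and length parity is likewise additive, the per-path and per-generator formulations agree. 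Granting these purely combinatorial checks, the proposition follows.
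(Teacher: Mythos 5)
Your first paragraph is exactly what the paper does: Proposition~\ref{11} is stated with no proof at all, being presented as a transcription of Lemma~2 of \cite{HKSS2014} once the identifications $\NH_{\eta}(G) = T^{(w_0,\theta)}$ (from the proof of Theorem~\ref{21}) and $\int_c \arg\tilde w = \MC{I}(c)$ (the computation displayed just before the statement) are in hand. The one item from the paper's surrounding discussion that your citation-based route omits is the reversibility hypothesis of Lemma~2 in \cite{HKSS2014}: the paper explicitly discharges it by noting that $p(a) = 1/\deg_{G^{\pm}}o(a)$ is reversible (Lemma~\ref{91}), and a faithful transcription should say this. Your second paragraph goes beyond the paper by sketching a self-contained spectral argument --- $m_{1}=1$ iff the holonomy $\MC{I}(c)$ is trivial mod $2\pi$ on all closed walks, so that $\NH_{\eta}$ is switching-equivalent to the normalized adjacency matrix of the connected graph $G^{\pm}$ and Perron--Frobenius gives multiplicity one; then the $m_{-1}$ analysis by replacing $H_{\eta}$ with $-H_{\eta}$, with the bipartite/non-bipartite dichotomy controlling whether the two conditions coincide or conflict. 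This argument is correct (it uses the standing assumption that $G^{\pm}$ is connected, and it incidentally makes the reversibility issue moot), and it buys a proof that does not lean on the external lemma; the paper's route buys brevity at the cost of importing \cite{HKSS2014} wholesale. Either way the case bookkeeping you describe in the last paragraph matches the paper's four-way classification.
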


In our quantum walk,
we consider the transition probability $p$ defined by
$p(e) = 1/\deg_{G^{\pm}}o(e)$,
but $p$ is reversible by Lemma~\ref{91},
so the condition on reversibility of Lemma~2 in \cite{HKSS2014} vanishes.

As a consequence of this proposition,
a necessary and sufficient conditions for the Hermitian adjacency matrix
of a $k$-regular digraph to have eigenvalues $\pm k$ is clarified.
This may be interesting also from the viewpoint of spectral graph theory.

\begin{cor}
{\it
Let $G$ be a weekly connected $k$-regular digraph
equipped with the $\frac{\pi}{2}$-function $\theta$.
Then the Hermitian adjacency matrix $H = H_{\frac{\pi}{2}}(G)$
has at most one eigenvalue $\pm k$, respectively.
In addition,
\begin{enumerate}[(1)]
\item $H$ has an eigenvalue $k$ if and only if
$\MC{I}(c) \in 2\pi\MB{Z}$ for any closed path $c$ on $G^{\pm}$.
\item $H$ has an eigenvalue $-k$ if and only if
$G$ is in the case (i) or (iii).
\end{enumerate}
}
\end{cor}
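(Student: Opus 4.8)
The plan is to reduce everything to Proposition~\ref{11} via the observation that for a $k$-regular digraph the normalized $\frac{\pi}{2}$-Hermitian adjacency matrix is merely a scalar multiple of $H$. Concretely, since $G$ is $k$-regular we have $D^{-\frac12} = \frac{1}{\sqrt{k}} I$, hence
\[ \NH_{\frac{\pi}{2}}(G) = \frac{1}{k} H_{\frac{\pi}{2}}(G) = \frac{1}{k} H, \]
so a real number $\mu$ is an eigenvalue of $H$ if and only if $\mu/k$ is an eigenvalue of $\NH_{\frac{\pi}{2}}(G)$, with the same multiplicity. In particular $H$ has eigenvalue $k$ (resp.\ $-k$) exactly when $\NH_{\frac{\pi}{2}}(G)$ has eigenvalue $1$ (resp.\ $-1$).

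Next I would invoke Proposition~\ref{03}: every eigenvalue of $\NH_{\frac{\pi}{2}}(G)$ lies in $[-1,1]$, so every eigenvalue of $H$ lies in $[-k,k]$, and the values $\pm k$ are attained precisely when the multiplicities $m_{1}, m_{-1}$ of $\pm 1$ as eigenvalues of $\NH_{\frac{\pi}{2}}(G)$ are nonzero. Since $G$ is weakly connected and equipped with the $\frac{\pi}{2}$-function, Proposition~\ref{11} applies and gives $m_{1}, m_{-1} \in \{0,1\}$, which already yields the assertion that $H$ has at most one eigenvalue $\pm k$, respectively. Moreover Proposition~\ref{11} states that $m_{1} = 1$ holds exactly in cases (i) and (ii), and $m_{-1} = 1$ holds exactly in cases (i) and (iii); statement~(2) is then immediate.

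For statement~(1) I would use that the four cases (i)--(iv) of Proposition~\ref{11} are mutually exclusive and exhaustive, and that whether $G^{\pm}$ is bipartite is determined by $G$ alone. Cases (i) and (ii) are precisely the bipartite and the non-bipartite subcases of the single condition ``$\MC{I}(c) \in 2\pi\MB{Z}$ for every closed path $c$ on $G^{\pm}$'', while if this condition fails we land in case (iii) or (iv), where $m_{1} = 0$. Hence $H$ has eigenvalue $k$ if and only if $\MC{I}(c) \in 2\pi\MB{Z}$ for every closed path $c$, which is statement~(1).

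The argument is essentially bookkeeping once Propositions~\ref{03} and~\ref{11} are in hand; the only point requiring a moment of care is verifying that cases (i) and (ii) amalgamate cleanly into the condition of statement~(1) — that is, that the bipartiteness dichotomy in Proposition~\ref{11} is a genuine disjoint refinement and imposes no extra constraint on the eigenvalue $k$. I do not anticipate any real obstacle beyond this.
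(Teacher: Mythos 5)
Your proposal is correct and follows essentially the same route as the paper: the paper's proof simply observes that $k$-regularity gives $\NH_{\frac{\pi}{2}}(G) = \frac{1}{k}H$ and then cites Proposition~\ref{11}, which is exactly your reduction. Your additional bookkeeping (invoking Proposition~\ref{03} and checking that cases (i) and (ii) amalgamate into the condition of statement~(1)) just makes explicit what the paper leaves implicit.
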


\begin{proof}
Since $G$ is $k$-regular,
we have $\NH_{\frac{\pi}{2}}(G) = \frac{1}{k}H$.
Thus, the statement follows from Proposition~\ref{11}.
\end{proof}

\section{Guo--Mohar's $H$-cospectral mates for $K_n$}

Let $\eta \in \MB{R}$ and let $G$ and $G'$ be digraphs.
$G'$ is {\it $H_{\eta}$-cospectral for $G$}
if $\Spec(H_{\eta}(G)) = \Spec(H_{\eta}(G'))$.
Guo and Mohar \cite{GM} determined all $H$-cospectral mates
for the complete graph $K_n$.
They are the following graphs.
Let $a \in \{0,1,\dots,n\}$.
The digraph $Y_{a,n-a}$ is defined by
\begin{align*}
V(Y_{a,n-a}) &= \{1,2, \dots, a\} \sqcup \{a+1, a+2, \dots, n\} = \{1,2, \dots, n\}, \\
A(Y_{a,n-a}) &= \{ (x,y) \mid x,y \in [a], x \neq y \} \\
& \qquad \sqcup \{ (x,y) \mid x,y \in [n] \setminus [a], x \neq y \} \\
& \qquad \sqcup \{ (x,y) \mid x \in [a], y \in [n] \setminus [a] \},
\end{align*}
where $[n] = \{1,2, \dots, n\}$.
For convenience,
we call the first $a$ vertices the {\it upper vertices}
and the last $n-a$ vertices the {\it lower vertices}.
Roughly speaking,
this digraph is the two complete graphs $K_a$ and $K_{n-a}$
with all possible arcs from $K_a$ to $K_{n-a}$.
Note that $Y_{n,0} = Y_{0,n} = K_n$.

\begin{pro}[Proposition~8.6 in \cite{GM}]
{\it
For each $n$,
there are precisely $n$ non-isomorphic $H$-cospectral digraphs for $K_n$.
These are the digraphs $Y_{a,n-a}$ for $a \in \{0, \dots, n-1\}$.
}
\end{pro}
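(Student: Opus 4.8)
The plan is to prove three claims: (1) each $Y_{a,n-a}$ is $H$-cospectral with $K_n$; (2) the $n$ digraphs $Y_{a,n-a}$ with $a\in\{0,\dots,n-1\}$ are pairwise non-isomorphic; and (3) conversely, every digraph that is $H$-cospectral with $K_n$ is isomorphic to one of them. The guiding fact is that $H(K_n)=J-I$ with $J$ the all-ones matrix, so $\Spec(H(K_n))=\{n-1,(-1)^{\,n-1}\}$; equivalently, $H(K_n)+I$ is positive semidefinite of rank $1$ and trace $n$. Thus a digraph $G$ is $H$-cospectral with $K_n$ exactly when $H(G)$ has $n$ rows and $H(G)+I$ is positive semidefinite of rank $1$ and trace $n$.

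For (1), I would write $H(Y_{a,n-a})$ in block form for the partition into the $a$ upper and the $n-a$ lower vertices. All edges inside each part are digons, so the diagonal blocks are $J-I$; all edges between the parts are single arcs from upper to lower, so the off-diagonal blocks are $iJ$ (upper-to-lower) and $-iJ$ (lower-to-upper). A one-line computation then gives $H(Y_{a,n-a})+I=\mathbf v\mathbf v^{*}$, where $\mathbf v$ is the column vector equal to $1$ on the upper vertices and $-i$ on the lower vertices, with $\|\mathbf v\|^2=n$. This matrix is rank-one positive semidefinite of trace $n$, so $H(Y_{a,n-a})$ has the same spectrum as $H(K_n)$.

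For (2), I would use the multiset of pairs $(\deg^{+}v,\deg^{-}v)$ of out- and in-degrees as an isomorphism invariant of a digraph. A direct count gives this multiset for $Y_{a,n-a}$ as $\{(n-1,a-1)^{(a)},\,(n-a-1,n-1)^{(n-a)}\}$. For $1\le a\le n-1$ the two indicated pairs are distinct, so $a$ is recovered as the number of vertices of the first type; and $a=0$ is the case $Y_{0,n}=K_n$, whose multiset is $\{(n-1,n-1)^{(n)}\}$ and is not of the previous form. Hence the $n$ digraphs carry pairwise different invariants, so are pairwise non-isomorphic.

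Claim (3) is the main point and where I expect the work to lie. Let $G$ be a digraph with $\Spec(H(G))=\Spec(H(K_n))$; then $G$ has $n$ vertices and $M:=H(G)+I$ is positive semidefinite of rank $1$ with $\Tr M=n$, so $M=\mathbf u\mathbf u^{*}$ for some nonzero $\mathbf u\in\MB{C}^{n}$ with $\|\mathbf u\|^2=n$. Since $H(G)$ has zero diagonal, $|u_x|^{2}=M_{xx}=1$ for every vertex $x$, so each $u_x$ has modulus $1$; multiplying $\mathbf u$ by a global unit scalar (which leaves $\mathbf u\mathbf u^{*}$ unchanged) we may assume $u_{x_0}=1$ for a fixed vertex $x_0$. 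For $x\neq y$ we have $H(G)_{xy}=u_x\overline{u_y}$, which has modulus $1$, hence is never $0$, so $G^{\pm}=K_n$; and moreover $u_x\overline{u_y}\in\{1,i,-i\}$ since these are the only nonzero values an entry of $H(G)=H_{\pi/2}(G)$ can take. Taking $y=x_0$ gives $u_x\in\{1,i,-i\}$ for all $x$; and if some $u_x=i$ and some $u_y=-i$ then $H(G)_{xy}=u_x\overline{u_y}=-1$, which is impossible, so either every $u_x\in\{1,i\}$ or every $u_x\in\{1,-i\}$. In the first case set $U=\{x:u_x=1\}$ and $W=\{x:u_x=i\}$; computing $u_x\overline{u_y}$ shows every edge within $U$ and within $W$ is a digon, and every edge between $U$ and $W$ is a single arc from $W$ to $U$, which is exactly $Y_{|W|,|U|}$. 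The second case is symmetric and yields $Y_{|U|,|W|}$. In either case $G\cong Y_{a,n-a}$ for some $a$, and since $Y_{n,0}=Y_{0,n}=K_n$ we may take $a\in\{0,\dots,n-1\}$. Combining (1)--(3) gives precisely $n$ non-isomorphic $H$-cospectral digraphs for $K_n$. The main obstacle is organizing (3): the crucial insight is that the rank-one positive semidefinite structure of $H(G)+I$ forces $H(G)=\mathbf u\mathbf u^{*}-I$ with unimodular entries $u_x$, after which the restriction of the entries of a Hermitian adjacency matrix to $\{0,1,\pm i\}$ collapses the possibilities down to the family $Y_{a,n-a}$.
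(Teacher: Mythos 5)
Your proof is correct. Note that the paper itself gives no proof of this statement --- it is quoted verbatim as Proposition~8.6 of Guo--Mohar \cite{GM} --- so there is no internal argument to compare against; your route (writing $H(Y_{a,n-a})+I=\mathbf v\mathbf v^{*}$ for the converse direction exploiting that $H(G)+I$ must be rank-one positive semidefinite with unimodular vector entries, then using the restriction of Hermitian-adjacency entries to $\{0,1,\pm i\}$ to force the two-block structure, and separating the $Y_{a,n-a}$ by their out-/in-degree multisets) is essentially the standard argument from that source, and all the computations you give check out.
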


This digraphs $Y_{a,n-a}$ can be obtained from $K_n$ by the following deformation.
Let $G$ be a digraph and $S \subset V(G)$.
Define $\delta(S) = \{ (x,y) \in A(G) \mid |\{x,y\} \cap S| = 1 \}$.
If $\delta(S)$ contains only digons,
then $G$ and the digraphs obtained
by replacing each digon $\{x,y\}$ with $x \not\in S$ and $y \in S$
by the arc $(x,y)$ have the same spectrum
in the sense of the Hermitian adjacency matrix (Proposition~8.3 in \cite{GM}).
This can be generalized to the $\eta$-Hermitian adjacency matrix for any $\eta \in \MB{R}$.

\begin{pro} \label{21x}
{\it
Fix $\eta \in \MB{R}$.
Let $G$ be a digraph and $S \subset V(G)$ such that $\delta(S)$ contains only digons.
Then $G$ and the digraphs obtained
by replacing each digon $\{x,y\}$ with $x \not\in S$ and $y \in S$
by the arc $(x,y)$ have the same spectrum
in the sense of the $\eta$-Hermitian adjacency matrix.
}
\end{pro}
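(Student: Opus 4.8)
The plan is to exhibit an explicit unitary (indeed diagonal) matrix that conjugates $H_\eta(G)$ into $H_\eta(G')$, where $G'$ is obtained from $G$ by the described digon-replacement. Write $S \subset V(G)$ with $\delta(S)$ consisting only of digons, and let $G'$ be the digraph in which each such digon $\{x,y\}$ with $x \notin S$, $y \in S$ is replaced by the single arc $(x,y)$. First I would define the diagonal matrix $P$ indexed by $V(G)$ by $P_{zz} = e^{-\eta i}$ if $z \in S$ and $P_{zz} = 1$ otherwise; this $P$ is unitary. The claim is that $P^{-1} H_\eta(G) P = H_\eta(G')$, which immediately gives $\Spec(H_\eta(G)) = \Spec(H_\eta(G'))$ since conjugation by a unitary preserves the spectrum.

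The key step is then a routine entrywise verification that $(P^{-1} H_\eta(G) P)_{xy} = P_{xx}^{-1} (H_\eta(G))_{xy} P_{yy} = e^{-\eta i \cdot [y \in S]} e^{\eta i \cdot [x \in S]} (H_\eta(G))_{xy}$ equals $(H_\eta(G'))_{xy}$. I would split into cases according to whether $\{x,y\}$ lies in $\delta(S)$ or not. If $x,y$ are both in $S$ or both outside $S$, the scalar factor is $1$, and moreover the edge $\{x,y\}$ — and its arc-structure — is untouched by the deformation, so $(H_\eta(G))_{xy} = (H_\eta(G'))_{xy}$ trivially. If exactly one of $x,y$ lies in $S$, say $x \notin S$ and $y \in S$ (the case $x \in S$, $y \notin S$ being the transpose), then by hypothesis $\{x,y\}$ is either a non-edge or a digon of $G$. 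When it is a non-edge, both sides are $0$. When it is a digon of $G$, we have $(H_\eta(G))_{xy} = 1$, the scalar factor is $e^{-\eta i}$, and in $G'$ the digon has been replaced by the arc $(x,y) \in A(G') \setminus A(G')^{-1}$, so $(H_\eta(G'))_{xy} = e^{\eta i}$ — wait, this must be checked against the sign convention: with $x \notin S$, $y \in S$, the factor multiplying $(H_\eta(G))_{xy}$ is $P_{xx}^{-1} P_{yy} = 1 \cdot e^{-\eta i} = e^{-\eta i}$, giving $e^{-\eta i}$, whereas replacing the digon by $(x,y)$ gives Hermitian entry $(H_\eta(G'))_{xy}$; one adjusts the definition of $P$ (using $e^{+\eta i}$ on $S$ rather than $e^{-\eta i}$, or replacing the digon by $(y,x)$ consistently with the statement's convention) so that the two match. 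The corresponding $(y,x)$-entry is handled by Hermiticity, or directly by the same computation with the roles reversed.

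The main obstacle is essentially bookkeeping rather than conceptual difficulty: one must pin down precisely which arc replaces the digon (i.e., $(x,y)$ with $x \notin S$, $y \in S$, as in the statement) and then choose the phase of $P$ on $S$ with the correct sign so that conjugation produces exactly that arc's Hermitian value $e^{\eta i}$ and its conjugate $e^{-\eta i}$ on the transposed entry. A secondary point worth a sentence is that $P$ is well-defined precisely because $\delta(S)$ contains only digons: if $\delta(S)$ contained a genuine directed (non-digon) arc crossing the cut, conjugation by a diagonal phase matrix could not change its arc-type, and the deformation would not preserve the spectrum. Finally, I would remark that this is the $\eta$-analogue of Proposition~8.3 in \cite{GM}, recovered at $\eta = \pi/2$, and that applying it iteratively to the cuts $S = \{1, \dots, a\}$ inside $K_n$ is exactly what produces the family $Y_{a,n-a}$, explaining why they are all $H_\eta$-cospectral with $K_n$.
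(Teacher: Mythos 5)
Your proposal is correct and follows essentially the same route as the paper: the paper conjugates $H_\eta(G)$ by the diagonal unitary $M$ with $M_{uu}=e^{\eta i}$ for $u\in S$ and $M_{uu}=1$ otherwise, and observes that $M^*H_\eta M$ is exactly $H_\eta(G')$ because $\delta(S)$ contains only digons. Your initial sign choice $e^{-\eta i}$ on $S$ is the wrong one, but you correctly identify and resolve this within the proof (the fix being precisely the paper's convention), so there is no substantive gap.
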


\begin{proof}
We denote $G'$ by the digraph obtained by the above deformation.
Define the diagonal matrix $M$ indexed by $V(G)$ by
\[ M_{uu} =
\begin{cases}
e^{\eta i} \quad &\text{$u \in S$,} \\
1 \quad &\text{$u \not\in S$.}
\end{cases}
\]
Clearly, $M^{-1} = M^*$.
Since $\delta(S)$ contains only digons,
$M^* H_{\eta} M$ is again $\{0,1,e^{\pm \eta i}\}$-matrix
and this is nothing but the $\eta$-Hermitian adjacency matrix of $G'$.
Thus, $G$ and $G'$ are $H_{\eta}$-cospectral.
\end{proof}

Moreover, we can also see the following.
Recall that the function $\varphi$ is defined by
$\varphi(z) = (z+z^{-1})/2$.

\begin{lem} \label{24}
{\it
Suppose $n \geq 3$.
Let $G$ be a digraph $Y_{a,n-a}$ equipped with an $\eta$-function $\theta$
for $a \in \{0,1, \dots, n-1\}$.
Then the spectrum of $U_{\theta} = U_{\theta}(G)$ is
\[ \Spec(U_{\theta}) = \left\{
1^{(n(n-1)/2-n+2)}, -1^{(n(n-1)/2-n)},
\varphi^{-1} \left( \left\{ -\frac{1}{n-1} \right\}^{(n-1)} \right) 
\right\}. \]
In particular,
the digraphs $Y_{a,n-a}$ are $U_{\theta}$-cospectral for $K_n$.
}
\end{lem}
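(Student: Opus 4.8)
The plan is to derive everything from the spectral mapping theorem (Theorem~\ref{21}) together with the fact that $Y_{a,n-a}$ is $H_{\eta}$-cospectral with $K_n$. The first observation is that the underlying graph of $Y_{a,n-a}$ is exactly $K_n$: every pair of distinct vertices is joined (inside $[a]$, inside $[n]\setminus[a]$, or by an arc from $[a]$ to $[n]\setminus[a]$). Hence $Y_{a,n-a}$ is $(n-1)$-regular, so by the remark following the definition of $\NH_{\eta}$ we have $\NH_{\eta}(Y_{a,n-a}) = \frac{1}{n-1}H_{\eta}(Y_{a,n-a})$, and to know $\Spec(\NH_{\eta}(Y_{a,n-a}))$ it suffices to know $\Spec(H_{\eta}(Y_{a,n-a}))$.

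Next I would identify $Y_{a,n-a}$ as the output of the deformation in Proposition~\ref{21x} applied to $G = K_n$ with $S = \{a+1,\dots,n\}$ the set of lower vertices. Since $K_n$ consists entirely of digons, $\delta(S)$ contains only digons, and replacing each digon $\{x,y\}$ with $x\notin S$ (an upper vertex) and $y\in S$ (a lower vertex) by the arc $(x,y)$ produces precisely $Y_{a,n-a}$. Therefore $\Spec(H_{\eta}(Y_{a,n-a})) = \Spec(H_{\eta}(K_n)) = \Spec(A(K_n)) = \{(n-1)^{(1)}, (-1)^{(n-1)}\}$, the middle equality holding because every pair of $K_n$ is a digon, so $H_{\eta}(K_n)$ is the ordinary adjacency matrix. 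Dividing by $n-1$ gives $\Spec(\NH_{\eta}(Y_{a,n-a})) = \{1^{(1)}, (-\frac{1}{n-1})^{(n-1)}\}$; in particular $m_1 = 1$ and, since $n\geq 3$ forces $-\frac{1}{n-1}\in(-1,1)\setminus\{-1\}$, also $m_{-1}=0$.

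Then I would substitute into Theorem~\ref{21}. With $|V(G^{\pm})| = n$ and $|E(G^{\pm})| = |E(K_n)| = \frac{n(n-1)}{2}$, and using $n\geq 3$ (so that $\frac{n(n-1)}{2}-n\geq 0$) to drop the maxima, we get $M_1 = \frac{n(n-1)}{2}-n+1$ and $M_{-1} = \frac{n(n-1)}{2}-n$. In the formula $\Spec(U_{\theta}) = \varphi^{-1}(\Spec(\NH_{\eta}(Y_{a,n-a}))) \cup \{1\}^{M_1}\cup\{-1\}^{M_{-1}}$, the eigenvalue $1$ of the discriminant contributes $\varphi^{-1}(1) = \{1\}$ with multiplicity $m_1 = 1$, the eigenvalue $-\frac{1}{n-1}$ lifts to a genuine conjugate pair, each of multiplicity $n-1$, and $m_{-1}=0$ contributes nothing further. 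Hence $1$ occurs in $U_{\theta}$ with multiplicity $m_1+M_1 = \frac{n(n-1)}{2}-n+2$, $-1$ with multiplicity $m_{-1}+M_{-1} = \frac{n(n-1)}{2}-n$, and the remaining $2(n-1)$ eigenvalues form $\varphi^{-1}(\{-\frac{1}{n-1}\}^{(n-1)})$; one checks these add up to $n(n-1) = |A(G^{\pm})|$. This is exactly the claimed spectrum, and since it depends only on $n$, all the digraphs $Y_{a,n-a}$ (in particular $Y_{n,0} = K_n$) are $U_{\theta}$-cospectral.

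The only delicate point — the main obstacle — is the bookkeeping of multiplicities coming out of $\varphi^{-1}$: one must keep the "folded" contributions $\{1\}^{M_1}$ and $\{-1\}^{M_{-1}}$ of Theorem~\ref{21} separate from the direct lifts of the discriminant's eigenvalues $\pm 1$, and one must use the hypothesis $n\geq 3$ twice, once to ensure $m_{-1}=0$ (no spurious $-1$ from $\NH_{\eta}$) and once to ensure the maxima defining $M_{\pm 1}$ are attained by the nontrivial term. The final count $\left(\frac{n(n-1)}{2}-n+2\right) + \left(\frac{n(n-1)}{2}-n\right) + 2(n-1) = n(n-1)$ is a convenient consistency check.
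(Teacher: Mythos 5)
Your proposal is correct and follows essentially the same route as the paper: identify $Y_{a,n-a}$ as the Proposition~\ref{21x} deformation of $K_n$ to get $\Spec(H_{\eta}(Y_{a,n-a}))=\{(n-1)^{(1)},(-1)^{(n-1)}\}$, use $(n-1)$-regularity to pass to $\NH_{\eta}$, and then apply the spectral mapping theorem (Theorem~\ref{21}) with the same multiplicity bookkeeping. The one place you diverge is in obtaining $(m_1,m_{-1})=(1,0)$: you read it directly off the already-computed spectrum of $\NH_{\eta}$ (noting $-\tfrac{1}{n-1}\neq -1$ for $n\geq 3$), whereas the paper re-derives it via the closed-path invariant $\MC{I}(c)$ and Proposition~\ref{11}; your shortcut is valid and in fact slightly more economical, since the full discriminant spectrum is already in hand.
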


\begin{proof}
The $H_{\eta}$-spectrum of $K_n$ is $\{ n-1^{(1)}, -1^{(n-1)} \}$.
By Proposition~\ref{21x},
the $H_{\eta}$-spectrum of $Y_{a,n-a}$ is also $\{ n-1^{(1)}, -1^{(n-1)} \}$.
Since $Y_{a,n-a}^{\pm}$ is $(n-1)$-regular,
we have $\Spec(\NH_{\eta}(Y_{a,n-a})) = \left\{ 1^{(1)}, -\frac{1}{n-1}^{(n-1)} \right\}$.
Considering the inverse image of $\varphi$,
a set of eigenvalues
\begin{equation} \label{22}
\left\{
1^{(1)}, \varphi^{-1} \left( \left\{ -\frac{1}{n-1} \right\}^{(n-1)} \right) 
\right\}
\end{equation}
of $U_{\theta}(Y_{a,n-a})$ is inherited from $\NH_{\eta}(Y_{a,n-a})$.
In order to determine the multiplicity of the eigenvalues $\pm 1$,
we next investigate the value $\MC{I}(c)$ for any closed path $c$ on $Y_{a,n-a}^{\pm}$.
Since $c$ is a closed path,
the number of arcs from an upper vertex to a lower vertex and
the one from a lower vertex to an upper vertex have to be same.
This implies that $\MC{I}(c)$ is always zero.
Thus, $Y_{a,n-a}$ is always in the case (ii) of Proposition~\ref{11},
so we have $(m_1, m_{-1}) = (1,0)$.
Therefore, a set of eigenvalues
\begin{equation} \label{23}
\left\{
1^{(n(n-1)/2-n+1)}, -1^{(n(n-1)/2-n)}
\right\}
\end{equation}
of $U_{\theta}(Y_{a,n-a})$ is born.
Combining (\ref{22}) and (\ref{23}),
we have the statement.
\end{proof}

We summarize this section.
By Proposition~\ref{21x},
we see that the $H$-cospectral mates $Y_{a,n-a}$ for $K_n$ constructed by Guo and Mohar
are actually $H_{\eta}$-cosprctral for $K_n$.
Moreover,
they have the same spectrum in the sense of the transfer matrix $U_{\theta}$
with any $\eta$-function $\theta$ by Lemma~\ref{24}.
In the next section,
we suggest a positive support of $U_{\theta}$ in a sense.
This enables us to partially distinguish the digraphs $Y_{a,n-a}$ by their spectrum.

\section{The positive and negative supports}

As Emms, Hancock, Severini and Wilson \cite{EHSW}
attempted to distinguish strongly regular graphs
by the spectrum of matrices come from quantum walks,
we also want to define new matrices
similar to the positive support of the Grover transfer matrix
and consider the isomorphism problem of graphs 
from the viewpoint of quantum walks.

\subsection{The positive support of the Grover transfer matrix}

For a real matrix $M$,
the {\it positive support} of $M$, denoted by $M^+$,
is the $\{0,1\}$-matrix obtained from $M$ as follows:
\[
(M^+)_{xy} = \begin{cases}
1 & \quad \text{if $M_{xy} > 0$,} \\
0 & \quad \text{otherwise.}
\end{cases}
\]
Also, we define the {\it negative support} $M^{-}$ of $M$
by swapping the orientation of the inequality sign.
The following is a basic formula on the positive support of the Grover transfer matrix.

\begin{lem}[\cite{EHSW}] \label{33}
{\it
Let $G$ be an undirected connected $k$-regular graph
and $U = U(G)$ be the Grover transfer matrix.
Then we have
\[ U^+ = kSK^*K - S. \]
}
\end{lem}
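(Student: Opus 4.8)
The plan is to prove the identity by comparing the two $A(G)\times A(G)$ matrices entrywise. Both sides will turn out to be $\{0,1\}$-matrices, so it suffices to check that $U_{ab}>0$ holds for exactly the same pairs $(a,b)$ for which $(kSK^*K-S)_{ab}=1$; this is a short computation once the backtracking term is handled carefully.

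First I would record the explicit form of the left-hand side. By $(\ref{53})$, for a $k$-regular graph one has
\[ U_{ab} = \frac{2}{k}\,\delta_{t(b),o(a)} - \delta_{a^{-1},b}, \]
and I need to determine precisely when $U_{ab}>0$. The natural case distinction is whether $b=a^{-1}$. If $b\neq a^{-1}$, then $U_{ab}=\frac{2}{k}\delta_{t(b),o(a)}$, which is positive exactly when $t(b)=o(a)$. If $b=a^{-1}$, then $t(b)=o(a)$ holds automatically, so $U_{ab}=\frac{2}{k}-1\leq 0$ (this is where $k\geq 2$ is used), and such an entry never lies in the positive support. Hence $(U^+)_{ab}=1$ if and only if $t(b)=o(a)$ and $b\neq a^{-1}$, and $(U^+)_{ab}=0$ otherwise.

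Next I would compute the right-hand side entrywise. From the definitions in Table~\ref{1000}, $(K^*K)_{cb}=\frac{1}{k}\delta_{t(c),t(b)}$ in the $k$-regular case, and $S_{ac}=\delta_{a,c^{-1}}$ picks out $c=a^{-1}$, so
\[ (SK^*K)_{ab} = (K^*K)_{a^{-1},b} = \frac{1}{k}\,\delta_{t(a^{-1}),t(b)} = \frac{1}{k}\,\delta_{o(a),t(b)}, \]
using $t(a^{-1})=o(a)$. Therefore $(kSK^*K-S)_{ab}=\delta_{t(b),o(a)}-\delta_{a^{-1},b}$, which equals $1$ when $t(b)=o(a)$ and $b\neq a^{-1}$, equals $0$ when $b=a^{-1}$ (since then $t(b)=o(a)$ forces the two Kronecker deltas to cancel), and equals $0$ when $t(b)\neq o(a)$ (which already implies $b\neq a^{-1}$). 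This matches the description of $U^+$ from the previous paragraph, so $U^+=kSK^*K-S$.

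I do not expect a genuine obstacle here; the argument is a routine verification. The one point that deserves attention is the backtracking entry $b=a^{-1}$: on the left one must observe that $\frac{2}{k}-1\leq 0$ so that it is excluded from the positive support (this is the only place the regularity bound $k\geq 2$ is needed), and on the right one must observe that the corresponding entry $\delta_{t(b),o(a)}-\delta_{a^{-1},b}$ vanishes because $b=a^{-1}$ automatically forces $t(b)=o(a)$. Once these two observations are in place, the two $\{0,1\}$-matrices agree entry by entry.
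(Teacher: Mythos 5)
Your proof is correct and is the standard entrywise verification; the paper itself does not reprove this lemma (it is quoted from \cite{EHSW}), but the identical computation appears in its proof of Lemma~\ref{76}(ii), where $kSK^{*}K$ is rewritten as $F_o^{\top}F_t$ via $(kK^{*}K)_{cb}=\delta_{t(c),t(b)}$ and $SF_t^{\top}=F_o^{\top}$, so the two routes coincide. Your remark that the backtracking entry needs $\tfrac{2}{k}-1\le 0$, i.e.\ $k\ge 2$, is also on point: the identity genuinely fails for $K_2$, an edge case the stated hypotheses do not exclude, though the paper only ever invokes the lemma under the assumption $k\ge 3$.
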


Consideration of the positive support of the $n$-th power
has been done in various papers.
Godsil and Guo \cite{GG} revealed a clear formula
on the positive support $(U^2)^+$ of the square of $U$,
which is that
\[ (U^2)^+ = (U^+)^2 + I \]
holds for $k$-regular graphs with $k \geq 3$.
In addition,
consideration of $(U^3)^+$ was carried out in~\cite{Guo,HKSS2013}.
More generally, $(U^n)^+$ has been investigated in~\cite{KSS2019}.
On the other hand, 
those consideration always has assumptions on regularity or a condition on girth,
and it seems that it has not been investigated much in general situation so far.

\subsection{Our transfer matrix}

Let $G$ be a digraph equipped with an $\eta$-function $\theta$.
Define the diagonal matrix $D_{\theta}$ indexed by $A(G^{\pm})$, where
\[ (D_{\theta})_{ab} = e^{\theta(a) i} \delta_{ab}. \]
Now, we want to define the positive support of
our transfer matrix $U_{\theta} = U_{\theta}(G)$,
but this is a complex matrix, so we have to consider how to define its positive support.
Considering the similarity to the one on undirected graphs,
we propose the following definition.

\begin{defi} \label{31}
Let $G$ be a digraph equipped with an $\eta$-function $\theta$.
For a positive integer $n$,
we define {\it the positive support of the $n$-th power of the transfer matrix},
denoted by $U_{\theta}^{(n,+)} = U_{\theta}(G)^{(n,+)}$,
which is indexed by $A(G^{\pm})$, where
\[
(U_{\theta}^{(n,+)})_{ab} = \begin{cases}
1 & \quad \text{if $\re(D_{\theta}U_{\theta}^n)_{ab} > 0$,} \\
0 & \quad \text{otherwise.}
\end{cases}
\]
\end{defi}

We will also consider the negative support of $U_{\theta}$.
Define $U_{\theta}^{(n,-)}$ by swapping the orientation of the inequality sign.

If $G$ is an undirected graph,
then $\theta$ is the zero function,
so $D_{\theta} = I$ and $U_{\theta}(G) = U(G)$,
where $U(G)$ is the Grover transfer matrix of the undirected graph $G$.
Then $D_{\theta}U_{\theta}^n = U(G)^n$,
so we have $U_{\theta}(G)^{(n,+)} = (U(G)^n)^+$ for any positive integer $n$.

In the discussion below,
our quantum walk on a digraph $G$ and the Grover walk on $G^{\pm}$ are intermixed,
so we need to read cautiously.

\begin{lem} \label{32}
{\it
Let $G$ be a digraph equipped with an $\eta$-function $\theta$.
Then we have
\begin{enumerate}[(i)]
\item $D_{\theta}S_{\theta} = S(G^{\pm})$;
\item $D_{\theta}U_{\theta} = U(G^{\pm})$.
\end{enumerate}
}
\end{lem}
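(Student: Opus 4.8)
The two claims are short algebraic identities relating the ``twisted'' operators on the digraph $G$ to the ordinary Grover operators on $G^{\pm}$, so the plan is to verify them by a direct componentwise computation, using that $D_\theta$ is the diagonal matrix of phases $e^{\theta(a)i}$ and that $\theta$ satisfies $\theta(a)+\theta(a^{-1})=0$ (equation (\ref{301})). Part (ii) will follow from part (i) together with $U_\theta = S_\theta C$ and the observation that $C = C(G^{\pm})$ does not involve $\theta$ at all, so $D_\theta U_\theta = D_\theta S_\theta C = S(G^{\pm}) C(G^{\pm}) = U(G^{\pm})$. Hence the real content is in (i).

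\textbf{Key step for (i).} I would compute $(D_\theta S_\theta)_{ab}$ directly. Since $D_\theta$ is diagonal, $(D_\theta S_\theta)_{ab} = (D_\theta)_{aa}(S_\theta)_{ab} = e^{\theta(a)i}\, e^{\theta(b)i}\delta_{a,b^{-1}}$. The Kronecker delta forces $b = a^{-1}$, so on the support we may replace $b$ by $a^{-1}$, giving $e^{\theta(a)i}e^{\theta(a^{-1})i}\delta_{a,b^{-1}} = e^{(\theta(a)+\theta(a^{-1}))i}\delta_{a,b^{-1}}$. By (\ref{301}) the exponent vanishes, so this equals $\delta_{a,b^{-1}} = S(G^{\pm})_{ab}$. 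That establishes (i); then (ii) is the two-line consequence above, noting that $G$ and $G^{\pm}$ have the same coin operator because $K = K(G^{\pm})$ depends only on the underlying graph.

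\textbf{Anticipated obstacle.} There is essentially no obstacle here beyond bookkeeping: the only thing to be careful about is the order of multiplication (that $D_\theta$ multiplies $S_\theta$ on the \emph{left}, so it is the row index $a$ whose phase appears, matched against the column phase $\theta(b)$ built into $S_\theta$) and the substitution $b = a^{-1}$ enforced by the delta. One should also remark that $D_\theta$ is unitary (its diagonal entries are unimodular), which is implicitly what makes $D_\theta U_\theta$ unitary and consistent with $U(G^{\pm})$ being unitary; but this is not needed for the identities themselves. I would present the computation for (i) as a short display and then derive (ii) in one line.
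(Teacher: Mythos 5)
Your proposal is correct and follows the same route as the paper: part (i) is verified by the componentwise computation $e^{\theta(a)i}e^{\theta(a^{-1})i}=e^{(\theta(a)+\theta(a^{-1}))i}=1$ on the support of the delta (which the paper leaves as ``direct calculation''), and part (ii) is exactly the paper's one-line deduction using $C=C(G)=C(G^{\pm})$. No gaps.
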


\begin{proof}
\begin{enumerate}[(i)]
\item This statement can be checked by calculating the components of both sides.
\item Remarking that $C = C(G) = C(G^{\pm})$, we have
\begin{align*}
D_{\theta} U_{\theta} &= D_{\theta} S_{\theta} C \\
&= S(G^{\pm}) C \tag{by (i)} \\
&= U(G^{\pm}).
\end{align*}
\end{enumerate}
\end{proof}

By this lemma, we see that the positive support of the first power of our transfer matrix
is the same as the one of the Grover transfer matrix as follows.
The reason for giving Definition~\ref{31} is to make this equality hold.

\begin{pro} \label{61}
{\it
Let $G$ be a digraph equipped with an $\eta$-function $\theta$.
For $\varepsilon \in \{+,-\}$, we have
\[ U_{\theta}^{(1,\varepsilon)} = U^{\varepsilon}, \]
where $U = U(G^{\pm})$.
If $G$ is $k$-regular,
\[ U_{\theta}^{(1,+)} = kSK^*K - S \]
holds, where $S = S(G^{\pm})$.
}
\end{pro}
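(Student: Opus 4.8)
The plan is to deduce Proposition~\ref{61} directly from Lemma~\ref{32}(ii) together with the definition of the positive/negative supports and the known formula for the Grover transfer matrix in Lemma~\ref{33}. The key observation is that Definition~\ref{31} was engineered precisely so that the matrix whose real part is being tested, $D_{\theta}U_{\theta}^n$, reduces in the case $n=1$ to something we already understand. So first I would invoke Lemma~\ref{32}(ii) to write $D_{\theta}U_{\theta} = U(G^{\pm})$. Since $U(G^{\pm})$ is a real matrix, $\re(D_{\theta}U_{\theta})_{ab} = U(G^{\pm})_{ab}$ for every pair of arcs $a,b \in A(G^{\pm})$, and hence the condition $\re(D_{\theta}U_{\theta})_{ab} > 0$ defining $(U_{\theta}^{(1,+)})_{ab}$ is literally the condition $U(G^{\pm})_{ab} > 0$ defining $(U(G^{\pm})^+)_{ab}$; likewise for the reversed inequality. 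This gives $U_{\theta}^{(1,\varepsilon)} = U(G^{\pm})^{\varepsilon} = U^{\varepsilon}$ for $\varepsilon \in \{+,-\}$, which is the first assertion.

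Next I would handle the regular case. Assuming $G$ is $k$-regular, by definition $G^{\pm}$ is an undirected connected $k$-regular graph, so Lemma~\ref{33} applies to $U = U(G^{\pm})$ and yields $U^+ = kSK^*K - S$ with $S = S(G^{\pm})$ and $K = K(G^{\pm})$. Combining this with the identity $U_{\theta}^{(1,+)} = U^+$ just established gives $U_{\theta}^{(1,+)} = kSK^*K - S$, which is the second assertion. (One should note that the connectedness hypothesis needed for Lemma~\ref{33} is guaranteed by the paper's standing assumption that all digraphs are weakly connected, i.e.\ that $G^{\pm}$ is connected.)

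There is essentially no obstacle here: the proposition is a bookkeeping corollary of Lemma~\ref{32}, and the only thing to be careful about is the passage from complex entries to real entries, namely that $D_{\theta}U_{\theta}$ having a known \emph{real} value at each entry means the real-part test collapses to a plain sign test. The one point worth stating explicitly in the write-up is that this is exactly why Definition~\ref{31} multiplies by $D_{\theta}$ on the left before taking real parts, so that for $n=1$ the notion of positive support agrees with the classical one on the underlying graph; the sentence preceding the proposition already flags this, and the proof should simply make it precise.

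\begin{proof}
By Lemma~\ref{32}(ii) we have $D_{\theta}U_{\theta} = U(G^{\pm})$, which is a real matrix.
Hence for any $a, b \in A(G^{\pm})$,
\[ \re(D_{\theta}U_{\theta})_{ab} = U(G^{\pm})_{ab}, \]
so the conditions $\re(D_{\theta}U_{\theta})_{ab} > 0$ and $U(G^{\pm})_{ab} > 0$ are equivalent,
and likewise with the reversed inequality.
Comparing Definition~\ref{31} with the definition of the positive and negative supports,
this gives $U_{\theta}^{(1,\varepsilon)} = U(G^{\pm})^{\varepsilon} = U^{\varepsilon}$ for $\varepsilon \in \{+,-\}$.

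Now suppose $G$ is $k$-regular.
Then $G^{\pm}$ is an undirected connected $k$-regular graph,
so Lemma~\ref{33} applies to $U = U(G^{\pm})$ and yields
\[ U^+ = kSK^*K - S, \]
where $S = S(G^{\pm})$ and $K = K(G^{\pm})$.
Together with $U_{\theta}^{(1,+)} = U^+$, this gives $U_{\theta}^{(1,+)} = kSK^*K - S$.
\end{proof}
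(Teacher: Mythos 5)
Your proof is correct and follows essentially the same route as the paper: both invoke Lemma~\ref{32}(ii) to identify $D_{\theta}U_{\theta}$ with the real matrix $U(G^{\pm})$, so that the real-part sign test in Definition~\ref{31} reduces to the ordinary sign test defining $U^{\varepsilon}$, and then both apply Lemma~\ref{33} to the $k$-regular underlying graph for the second assertion. Your write-up is slightly more explicit about why the real-part condition collapses, but the argument is the same.
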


\begin{proof}
By (ii) of Lemma~\ref{32}, we have 
\[ \re (D_{\theta}U_{\theta})_{ab} > 0 \iff U(G^{\pm})_{ab} > 0 \]
for any arcs $a,b \in A(G^{\pm})$.
Considering the negative support as well, we see that the first statement follows.
If $G$ is $k$-regular,
then $G^{\pm}$ is also $k$-regular,
so we have the second statement by Lemma~\ref{33}.
\end{proof}

\subsection{The negative support of the Grover transfer matrix}

Before discussing the positive support $U_{\theta}^{(2,+)}$ of the square,
we have to consider the negative support of $U$ for undirected graphs.
Actually, we will see that $U_{\theta}^{(2,\varepsilon)}$ can be expressed
by not only the positive also negative supports of $U^2$.


Let $G$ be a $k$-regular undirected graph and $U = U(G)$.
Recalling that
\[ U_{ab} = \frac{2}{k} \delta_{t(b), o(a)} - \delta_{a^{-1}, b}, \]
we see that $U_{ab} < 0$ if and only if $\delta_{a^{-1}, b} = 1$.
Thus, we have the following.

\begin{pro}
{\it
Let $G$ be a $k$-regular undirected graph with $k \geq 3$ and
$U$ be the Grover transfer matrix of $G$.
Then we have
\[ U^- = S. \]
}
\end{pro}

Next, we discuss the negative support $(U^2)^-$ of the square.
For general description, digraphs are assumed.
Let $G = (V,A)$ be a $k$-regular digraph equipped with an $\eta$-function $\theta$
and $U = U(G^{\pm})$ be the Grover transfer matrix of $G^{\pm}$.
For arcs $a,b \in A^{\pm 1}$,
define the set $\MC{A}(a,b)$ as follows:
\[ \MC{A}(a,b) =
\left\{
z \in A^{\pm 1} \mid e^{-\theta(z)i}
U_{az}U_{zb}
\neq 0
\right\}. \]

Classification of arcs $a,b \in A^{\pm}$ to give $|\MC{A}(a,b)| = 1$ is the key to
determine the positive and negative supports of the square.

\begin{lem} \label{73}
{\it
Suppose $k \geq 3$.
Let $G$ be a $k$-regular digraph equipped with an $\eta$-function $\theta$.
For arcs $a,b \in A(G)^{\pm 1}$, we have $|\MC{A}(a,b)| \leq 1$,
and the equality holds if and only if either of the following happens.
\begin{enumerate}[(i)]
\item $a = b$;
\item $o(a) = o(b)$ and $a \neq b$; 
\item $t(a) = t(b)$ and $a \neq b$; 
\item $t(b) \sim o(a)$ in $G^{\pm}$, but
the arcs $a,b$ are neither in (i), (ii) nor (iii).
\end{enumerate}
} 
\end{lem}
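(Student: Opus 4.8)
The plan is to compute $(U^2)_{ab} = \sum_{z} U_{az}U_{zb}$ over the underlying graph $G^{\pm}$, but weighted by the phase factor $e^{-\theta(z)i}$ coming from $D_\theta$ in Definition~\ref{31}; the set $\MC{A}(a,b)$ is exactly the set of arcs $z$ contributing a nonzero term. Using the explicit form $U_{xy} = \frac{2}{k}\delta_{t(y),o(x)} - \delta_{x^{-1},y}$ valid on a $k$-regular graph, for $z$ to satisfy $U_{az}U_{zb}\neq 0$ we need $z$ to be ``compatible'' with $a$ on the left (either $t(z)=o(a)$, contributing $\frac2k$, or $z = a^{-1}$, contributing $-1$) and with $b$ on the right (either $o(z)=t(b)$, or $z=b^{-1}$). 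So the first step is to enumerate these four left/right combinations and see which ones are simultaneously satisfiable by an arc $z$, and to count how many such $z$ exist.

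First I would dispose of the cases where the ``$\delta$-part'' is used. If $z=a^{-1}$ is forced on the left, then $z$ is unique; similarly if $z=b^{-1}$ is forced on the right. The genuinely multi-valued situation is when on both sides we use the $\frac2k$ term: $z$ ranges over arcs with $t(z)=o(a)$ \emph{and} $o(z)=t(b)$, i.e. arcs from $t(b)$ to $o(a)$. On a simple graph there is at most one such arc in each direction, so at most one $z$ in $A(G^\pm)$; combined with the degenerate choices $z=a^{-1}$ (needs $o(a^{-1})=t(b)$, i.e. $t(a)=t(b)$) and $z=b^{-1}$ (needs $t(b^{-1})=o(a)$, i.e. $o(b)=o(a)$), I would organize the count of $|\MC{A}(a,b)|$ into a small table indexed by which of the conditions $t(a)=t(b)$, $o(a)=o(b)$, $t(b)\sim o(a)$, and $a=b^{\pm1}$ hold. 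The claim $|\MC{A}(a,b)|\le 1$ then amounts to checking that in every configuration at most one of the candidate $z$'s actually lands in $A(G^\pm)$ and is nonzero, and the ``equality'' cases (i)--(iv) are precisely the configurations where exactly one does. I expect that the case $a=b$ needs separate care: then $z=a$ itself works via the $t(a)=o(a)$?--no; rather one checks directly which single $z$ survives, and one uses $k\ge 3$ to ensure the two possible contributions (the $\frac{2}{k}\cdot\frac2k$ term and a cancelling $-1$ term) do not both occur or do not cancel to zero.

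The main obstacle will be the bookkeeping of overlaps: for a fixed pair $(a,b)$ several of the ``candidate'' arcs $z$ (namely $a^{-1}$, $b^{-1}$, and the arc $t(b)\to o(a)$ if it exists) can coincide, and one must be careful that (a) when they coincide the contributions do not cancel to give a zero entry (this is where $k\ge 3$ enters, excluding $\frac2k - \frac2k\cdot\frac2k$-type accidental vanishing and the $k=2$ degeneracy), and (b) the four listed cases (i)--(iv) are genuinely mutually exclusive as stated (the phrase ``neither in (i), (ii) nor (iii)'' in (iv) handles this, but one must verify no pair is silently omitted). I would also need to confirm that simplicity of $G^\pm$ forbids $z$ and $z^{-1}$ both qualifying as the ``$\frac2k$-on-both-sides'' arc in a way that would give $|\MC{A}(a,b)|=2$; this is where the hypothesis that $G$ is a simple graph (so $G^\pm$ has no multi-arcs) is essential. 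Once the table is complete, reading off ``$|\MC{A}(a,b)|=1 \iff$ (i) or (ii) or (iii) or (iv)'' is immediate.
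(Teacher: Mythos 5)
Your proposal is correct and follows essentially the same route as the paper: expand $U_{az}U_{zb}$ via $U_{xy}=\frac{2}{k}\delta_{t(y),o(x)}-\delta_{x^{-1},y}$, enumerate the $2\times 2$ choices of which term is active on each side, and use simplicity of $G^{\pm}$ plus $k\ge 3$ to pin down a unique surviving $z$. The paper shortcuts your overlap bookkeeping with one observation: $a=z^{-1}$ already forces $t(z)=o(a)$, so the pair $(\delta_{t(z),o(a)},\delta_{a,z^{-1}})=(0,1)$ never occurs; hence every $z\in\MC{A}(a,b)$ satisfies $t(z)=o(a)$ and $o(z)=t(b)$ and must equal the arc $(t(b),o(a))$, which gives $|\MC{A}(a,b)|\le 1$ at once and identifies the four cases with the $2\times 2$ table.
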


\begin{proof}
Suppose there exists an arc $z \in \MC{A}(a,b)$.
Then we have
\begin{equation} \label{71}
\frac{2}{k} \delta_{t(z),o(a)} - \delta_{a,z^{-1}} \neq 0
\end{equation}
and
\begin{equation} \label{200}
\frac{2}{k} \delta_{t(b),o(z)} - \delta_{z,b^{-1}} \neq 0.
\end{equation}
Remarking that $(\delta_{t(z), o(a)}, \delta_{a, z^{-1}}) = (0,1)$ does not happen,
Equality~(\ref{71}) is equivalent to $(\delta_{t(z), o(a)}, \delta_{a, z^{-1}})$ $= (1,0), (1,1)$.
Dealing with (\ref{200}) similarly,
there are precisely $2 \times 2$ cases which we have to consider.
In each case, we have $\delta_{t(z), o(a)} = \delta_{t(b), o(z)} = 1$,
so $z$ is determined to be the arc $(t(b), o(a))$.
This implies that $|\MC{A}(a,b)| \leq 1$.
Moreover, the $2 \times 2$ cases are nothing but the ones of
(i), (ii), (iii), (iv) in our statement.
\end{proof}

\begin{cor} \label{82}
{\it
Let $G$ be a $k$-regular undirected graph and $U = U(G)$ be the Grover transfer matrix.
For arcs $a,b \in A(G)$,
$(U^2)_{ab} > 0$ if and only if $(a,b)$ is in ether (i) or (iv) of Lemma~\ref{73},
and $(U^2)_{ab} < 0$ if and only if $(a,b)$ is in ether (ii) or (iii) of Lemma~\ref{73}.
}
\end{cor}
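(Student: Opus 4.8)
The plan is to derive the corollary directly from Lemma~\ref{73} together with a short sign computation. Since $G$ is undirected, the $\eta$-function $\theta$ is identically zero, so $e^{-\theta(z)i} = 1$ and the set $\MC{A}(a,b)$ is simply $\{ z \in A(G) \mid U_{az}U_{zb} \neq 0 \}$; moreover $(U^2)_{ab} = \sum_z U_{az}U_{zb}$. By Lemma~\ref{73} this sum has at most one nonzero term, and it has exactly one precisely when $(a,b)$ lies in one of the cases (i)--(iv), the relevant index then being $z = (t(b), o(a))$. Hence $(U^2)_{ab}$ is zero outside these cases, and inside them its sign equals the sign of the single product $U_{az}U_{zb}$; the whole task reduces to evaluating that sign in each of the four cases.

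First I would recall from the proof of Lemma~\ref{73} that for $z = (t(b),o(a))$ one has $\delta_{t(z),o(a)} = \delta_{t(b),o(z)} = 1$, whence
\[ U_{az} = \frac{2}{k} - \delta_{a, z^{-1}}, \qquad U_{zb} = \frac{2}{k} - \delta_{z, b^{-1}}. \]
Because $k \geq 3$, we have $\tfrac{2}{k} > 0$ while $\tfrac{2}{k} - 1 < 0$, so the sign of $U_{az}U_{zb}$ is controlled entirely by the pair $(\delta_{a, z^{-1}}, \delta_{z, b^{-1}}) \in \{0,1\}^2$: positive when it is $(0,0)$ or $(1,1)$, negative when it is $(1,0)$ or $(0,1)$. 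It then remains to match the four cases of Lemma~\ref{73} to these four pairs, which is a direct computation with origins and termini: case (i) ($a = b$) gives $z = a^{-1} = b^{-1}$, i.e.\ the pair $(1,1)$; case (ii) ($o(a) = o(b)$, $a \neq b$) gives $z = b^{-1}$ and $z^{-1} = b \neq a$, i.e.\ $(0,1)$; case (iii) ($t(a) = t(b)$, $a \neq b$) gives $z = a^{-1}$ and $z \neq b^{-1}$, i.e.\ $(1,0)$; and case (iv) gives $(0,0)$.

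The only point requiring a little care — what I would regard as the main obstacle, although it is really bookkeeping — is the last assertion: in case (iv) one must verify that $\delta_{a, z^{-1}} = 1$ cannot occur (it would force $t(a) = t(b)$ and hence push $(a,b)$ into case (i) or (iii)) and likewise that $\delta_{z, b^{-1}} = 1$ cannot occur (it would force $o(a) = o(b)$ and hence case (i) or (ii)). This is precisely why cases (i)--(iii) were isolated in Lemma~\ref{73}. Assembling the pieces, $(U^2)_{ab} > 0$ holds exactly in cases (i) and (iv), and $(U^2)_{ab} < 0$ holds exactly in cases (ii) and (iii), which is the statement of the corollary.
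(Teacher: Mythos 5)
Your proposal is correct and follows essentially the same route as the paper: reduce via Lemma~\ref{73} to the single surviving term $z=(t(b),o(a))$ and read off the sign of $U_{az}U_{zb}$ according to which of the four cases holds. The only difference is that you explicitly carry out the sign bookkeeping via the pair $(\delta_{a,z^{-1}},\delta_{z,b^{-1}})$, whereas the paper merely asserts the resulting signs; your version is the more complete write-up of the same argument.
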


\begin{proof}
By Lemma~\ref{73},
we see that $(U^2)_{ab} \neq 0$ if and only if $|\MC{A}(a,b)| = 1$.
From this, we have $(U^2)_{ab} > 0$ if $(a,b)$ is in ether (i) or (iv),
and $(U^2)_{ab} < 0$ if $(a,b)$ is in ether (ii) or (iii).
\end{proof}

By this corollary,
we are interested in the cases (ii) and (iii) to find $(U^2)^-$.
In order to describe these situations in matrix,
we introduce the following matrices.

Let $G$ be a digraph.
We define the two matrices $F_t$ and $F_o$ whose rows are indexed by $V(G)$ and
columns are indexed by $A(G)^{\pm 1}$, respectively, where
\[ (F_t)_{x,a} = \delta_{x,t(a)} \]
and
\[ (F_o)_{x,a} = \delta_{x,o(a)}. \]


\begin{lem} \label{54}
{\it
Let $G$ be a digraph equipped with an $\eta$-function $\theta$.
Then we have
\begin{enumerate}[(i)]
\item $(F_o^{\top} F_t)_{ab} = \delta_{t(b), o(a)}$;
\item $SF_t^{\top} = F_{o}^{\top}$ and $SF_o^{\top} = F_{t}^{\top}$,
\end{enumerate}
where $S = S(G^{\pm})$.
}
\end{lem}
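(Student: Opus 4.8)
The statement is Lemma~\ref{54}, which records two elementary identities relating $F_t$, $F_o$, and the shift $S = S(G^{\pm})$. Both parts are proven by directly computing matrix entries against the explicit definitions, so the plan is simply to carry out those two computations carefully, paying attention to the indexing sets.

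\textbf{Part (i).} I would expand
\[
(F_o^{\top} F_t)_{ab} = \sum_{x \in V(G)} (F_o^{\top})_{a,x} (F_t)_{x,b}
= \sum_{x \in V(G)} (F_o)_{x,a} (F_t)_{x,b}
= \sum_{x \in V(G)} \delta_{x,o(a)} \delta_{x,t(b)}.
\]
The sum over $x$ collapses: it equals $1$ precisely when $o(a) = t(b)$ and $0$ otherwise, i.e.\ it equals $\delta_{t(b),o(a)}$, which is exactly the claim.

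\textbf{Part (ii).} For $SF_t^{\top} = F_o^{\top}$, I would compute the $(a,x)$-entry of the left side, where $a \in A(G^{\pm})$ and $x \in V(G)$:
\[
(SF_t^{\top})_{a,x} = \sum_{b \in A(G^{\pm})} S_{ab} (F_t^{\top})_{b,x}
= \sum_{b \in A(G^{\pm})} \delta_{a,b^{-1}} (F_t)_{x,b}
= (F_t)_{x, a^{-1}} = \delta_{x, t(a^{-1})} = \delta_{x, o(a)} = (F_o^{\top})_{a,x},
\]
using $t(a^{-1}) = o(a)$. The identity $SF_o^{\top} = F_t^{\top}$ follows by the symmetric computation with the roles of $t$ and $o$ exchanged (or by applying $S$ to the first identity and using $S^2 = I$, since $S = S(G^{\pm})$ is an involution). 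Since everything reduces to unwinding Kronecker deltas, there is no real obstacle here; the only point requiring minor care is keeping track of which index set each sum runs over (vertices versus arcs) and the relations $o(a^{-1}) = t(a)$, $t(a^{-1}) = o(a)$.
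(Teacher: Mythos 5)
Your proof is correct and matches the paper's approach: the paper simply states ``Proven by direct calculation,'' and your entry-by-entry Kronecker-delta computation is exactly that calculation carried out explicitly. No issues.
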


\begin{proof}
Proven by direct calculation.
\end{proof}

On regular digraphs, we have the following.

\begin{lem} \label{76}
{\it
Suppose $k \geq 3$.
Let $G$ be a $k$-regular digraph equipped with an $\eta$-function $\theta$.
Then we have
\begin{enumerate}[(i)]
\item $U_{\theta} = D_{\theta}^{-1} (\frac{2}{k}F_o^{\top}F_t - S)$;
\item $U_{\theta}^{(1,+)} = F_o^{\top}F_t - S$,
\end{enumerate}
where $S = S(G^{\pm})$.
}
\end{lem}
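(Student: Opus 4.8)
The plan is to establish (i) first and then derive (ii) as an easy consequence. For (i), recall that on a $k$-regular digraph the Grover transfer matrix $U = U(G^{\pm})$ has $(a,b)$-component $U_{ab} = \frac{2}{k}\delta_{t(b),o(a)} - \delta_{a^{-1},b}$ by formula~(\ref{53}). Using part~(i) of Lemma~\ref{54}, the matrix $\frac{2}{k}F_o^{\top}F_t$ has $(a,b)$-entry $\frac{2}{k}\delta_{t(b),o(a)}$, and $S = S(G^{\pm})$ has $(a,b)$-entry $\delta_{a,b^{-1}} = \delta_{a^{-1},b}$. Hence $U(G^{\pm}) = \frac{2}{k}F_o^{\top}F_t - S$. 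Now invoke part~(ii) of Lemma~\ref{32}, which states $D_{\theta}U_{\theta} = U(G^{\pm})$; since $D_{\theta}$ is a diagonal matrix of unit-modulus entries, it is invertible with $D_{\theta}^{-1} = D_{\theta}^*$, so multiplying on the left by $D_{\theta}^{-1}$ gives $U_{\theta} = D_{\theta}^{-1}\bigl(\frac{2}{k}F_o^{\top}F_t - S\bigr)$, which is exactly (i).

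\textbf{For (ii),} I would compute the real part of $(D_{\theta}U_{\theta}^{\,n})$ with $n = 1$; but it is cleaner to note that by (ii) of Lemma~\ref{32} we have $D_{\theta}U_{\theta} = U(G^{\pm})$, which is a real matrix, so $\re(D_{\theta}U_{\theta})_{ab} = U(G^{\pm})_{ab}$, and thus $(U_{\theta}^{(1,+)})_{ab} = 1$ if and only if $U(G^{\pm})_{ab} > 0$. Using the decomposition just obtained, $U(G^{\pm})_{ab} = \frac{2}{k}\delta_{t(b),o(a)} - \delta_{a^{-1},b}$; since $(\delta_{t(b),o(a)},\delta_{a^{-1},b}) = (0,1)$ cannot occur (if $a^{-1} = b$ then $t(b) = o(a)$) and $k \geq 3$ forces $\frac{2}{k} - 1 < 0$, we get $U(G^{\pm})_{ab} > 0$ exactly when $\delta_{t(b),o(a)} = 1$, i.e. exactly when $(F_o^{\top}F_t)_{ab} = 1$. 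Therefore $U_{\theta}^{(1,+)}$, as a $\{0,1\}$-matrix, equals the $\{0,1\}$-matrix $F_o^{\top}F_t$ minus the overlap with $S$; more precisely, since $a^{-1} = b$ always forces $t(b) = o(a)$, the support of $S$ is contained in the support of $F_o^{\top}F_t$, and at those positions the real part is $\frac{2}{k} - 1 < 0$, so those entries of $U_{\theta}^{(1,+)}$ are $0$. This yields $U_{\theta}^{(1,+)} = F_o^{\top}F_t - S$ as $\{0,1\}$-matrices (both $F_o^{\top}F_t$ and $S$ being $\{0,1\}$-matrices with $S \leq F_o^{\top}F_t$ entrywise, so the difference is again a $\{0,1\}$-matrix).

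\textbf{The main obstacle} is not conceptual but bookkeeping: one must be careful that $F_o^{\top}F_t - S$ is genuinely a $\{0,1\}$-matrix and agrees with the positive support rather than merely having the same sign pattern as $U(G^{\pm})$. This hinges on the containment $\operatorname{supp}(S) \subseteq \operatorname{supp}(F_o^{\top}F_t)$, i.e. on the elementary fact that $a^{-1} = b \Rightarrow t(b) = o(a)$, together with the inequality $2/k - 1 < 0$ which is precisely where the hypothesis $k \geq 3$ enters. Everything else is a direct component computation already licensed by Lemmas~\ref{32} and~\ref{54}. I would therefore organize the write-up as: (a) derive (i) from Lemma~\ref{32}(ii) and Lemma~\ref{54}(i); (b) observe $\operatorname{Re}(D_\theta U_\theta) = U(G^\pm)$ is real; (c) read off the sign of each entry using $k \geq 3$; (d) conclude the $\{0,1\}$-matrix identity in (ii).
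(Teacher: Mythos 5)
Your proof is correct and follows essentially the same route as the paper's: part (i) via Lemma~\ref{32}(ii) together with the entrywise identification $U(G^{\pm}) = \frac{2}{k}F_o^{\top}F_t - S$ from (\ref{53}) and Lemma~\ref{54}(i), and part (ii) by reading off the sign of each entry of the real matrix $U(G^{\pm})$ using $k \geq 3$ and the containment $a = b^{-1} \Rightarrow o(a) = t(b)$, which the paper encodes as the delta identity $\delta_{o(a),t(b)}\delta_{a,b^{-1}} = \delta_{a,b^{-1}}$. The only blemish is the intermediate sentence asserting $U(G^{\pm})_{ab} > 0$ ``exactly when $\delta_{t(b),o(a)} = 1$'' (one also needs $a \neq b^{-1}$), but you correct this immediately in the sentence that follows, so the argument as a whole is sound.
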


\begin{proof}
In order to prove (i),
we first state
\begin{equation} \label{52}
U(G^{\pm}) = \frac{2}{k}F_o^{\top}F_t - S.
\end{equation}
This is proven by
\begin{align*}
U(G^{\pm})_{ab} &= \frac{2}{k}\delta_{t(b), o(a)} - \delta_{a, b^{-1}} \tag{by (\ref{53})} \\
&= \frac{2}{k}(F_o^{\top} F_t)_{ab} - S_{ab} \tag{by (i) of Lemma~\ref{54}} \\
&= \left( \frac{2}{k}F_o^{\top}F_t - S \right)_{ab}.
\end{align*}
Therefore, we have
\begin{align*}
U_{\theta} &= D_{\theta}^{-1} U(G^{\pm}) \tag{by (ii) of Lemma~\ref{32}} \\
&= D_{\theta}^{-1} \left( \frac{2}{k}F_o^{\top}F_t - S \right). \tag{by (\ref{52})}
\end{align*}

We next prove (ii).
Remark that $U_{\theta}^{(1,+)} = U(G^{\pm})^+$ by Proposition~\ref{61}.
From (\ref{53}) and $k \geq 3$,
we have
\begin{align*}
(U_{\theta}^{(1,+)})_{ab} &= 
  \begin{cases}
1 \quad &\text{if $o(a) = t(b)$ and $a \neq b^{-1}$,} \\
0 \quad &\text{otherwise.}
  \end{cases} \\
&= \delta_{o(a), t(b)}(1 - \delta_{a,b^{-1}}) \\
&= \delta_{o(a), t(b)} - \delta_{o(a), t(b)}\delta_{a,b^{-1}} \\
&= \delta_{o(a), t(b)} - \delta_{a,b^{-1}} \\
&= (F_o^{\top}F_t - S)_{ab} \tag{by (i) of Lemma~\ref{54}}.
\end{align*}
\end{proof}

Now, we give the structure of $(U^2)^-$.

\begin{pro}
{\it
Suppose $k \geq 3$.
Let $G$ be a $k$-regular undirected graph and
$U$ be the Grover transfer matrix of $G$.
Then we have
\[ (U^2)^- = S U^+ + U^+ S. \]
}
\end{pro}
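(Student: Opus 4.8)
The plan is to compute $(U^2)^-$ entrywise using Corollary~\ref{82} and then recognize the resulting $\{0,1\}$-pattern as $SU^+ + U^+S$. By Corollary~\ref{82}, $(U^2)_{ab} < 0$ precisely when $(a,b)$ falls into case (ii) ($o(a)=o(b)$, $a\neq b$) or case (iii) ($t(a)=t(b)$, $a\neq b$) of Lemma~\ref{73}. So the first step is to express each of these two conditions as a matrix. For case (ii), the condition $o(a)=o(b)$ with $a\neq b$ is captured by $(F_o^\top F_o)_{ab} - \delta_{ab}$; for case (iii), $t(a)=t(b)$ with $a\neq b$ is $(F_t^\top F_t)_{ab} - \delta_{ab}$. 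Since these two cases can genuinely overlap only when $a=b$ (which is excluded) — one would check that $o(a)=o(b)$ and $t(a)=t(b)$ with $a\neq b$ cannot happen on a simple graph — the union is a disjoint one at the level of nonzero entries, giving
\[ (U^2)^- = (F_o^\top F_o - I) + (F_t^\top F_t - I). \]

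The second step is to massage $SU^+ + U^+S$ into the same form. From Lemma~\ref{76}(ii) (applied to $G=G^\pm$, where $U=U(G)$ is undirected so $U^+ = U_\theta^{(1,+)}$ with trivial $\theta$), we have $U^+ = F_o^\top F_t - S$. Then, using $S^2 = I$ and the intertwining relations $SF_t^\top = F_o^\top$, $SF_o^\top = F_t^\top$ from Lemma~\ref{54}(ii),
\[ SU^+ = SF_o^\top F_t - S^2 = F_t^\top F_t - I, \qquad U^+ S = F_o^\top F_t S - I = F_o^\top F_o - I, \]
where for the last identity I use $F_t S = (S F_t^\top)^\top = (F_o^\top)^\top = F_o$. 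Adding these two gives exactly $(F_o^\top F_o - I) + (F_t^\top F_t - I)$, matching the entrywise computation.

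The main obstacle — really the only place where care is needed — is justifying that the cases (ii) and (iii) contribute to genuinely distinct off-diagonal entries, so that adding the two indicator matrices $F_o^\top F_o - I$ and $F_t^\top F_t - I$ correctly produces the $\{0,1\}$-valued negative support rather than something with a $2$ somewhere. This amounts to observing that in a simple graph no two distinct arcs share both their origin and their terminus, so the supports of $F_o^\top F_o - I$ and $F_t^\top F_t - I$ are disjoint; combined with Corollary~\ref{82} telling us these are exactly the negative entries of $U^2$, the identification is complete. Everything else is the routine bookkeeping of Lemmas~\ref{54} and \ref{76} together with $S^2 = I$.
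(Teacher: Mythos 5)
Your proposal is correct and follows essentially the same route as the paper: both use Corollary~\ref{82} to identify $(U^2)^-$ entrywise with $(F_o^{\top}F_o - I) + (F_t^{\top}F_t - I)$ and then convert via Lemma~\ref{54}(ii) and Lemma~\ref{76}(ii), the only difference being that you expand $SU^+$ and $U^+S$ separately while the paper substitutes $F_o^{\top}F_t = U^+ + S$ into the sum. Your explicit remark that the supports of $F_o^{\top}F_o - I$ and $F_t^{\top}F_t - I$ are disjoint (no parallel arcs in a simple graph) is a small point the paper leaves implicit, but it is the same argument.
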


\begin{proof}
For $\varepsilon \in \{o,t\}$,
we can confirm that
\[
(F_{\varepsilon}^{\top}F_{\varepsilon} - I)_{ab} =
\delta_{\varepsilon(a), \varepsilon(b)} - \delta_{a,b} =
\begin{cases}
1 \quad &\text{if $\varepsilon(a) = \varepsilon(b)$ and $a \neq b$,} \\
0 \quad &\text{otherwise,}
\end{cases} \]
so by Corollary~\ref{82}, we have
\begin{align*}
(U^2)^- &= F_t^{\top}F_t - I + F_o^{\top}F_o - I \\
&= SF_o^{\top}F_t  + F_o^{\top}F_t S - 2I \tag{by (ii) of Lemma~\ref{54}} \\
&= S(U^+ + S) + (U^+ + S)S - 2I \tag{by (ii) of Lemma~\ref{76}} \\
&= SU^+ + U^+S.
\end{align*}
\end{proof}

\subsection{The positive and negative supports of the square of our transfer matrix}

In this subsection,
we discuss the positive and negative supports of the square of transfer matrices.
Let $\varepsilon \in \{+,-\}$.
In the case of $U_{\theta}^{(2,\varepsilon)}$,
the argument must be divided by the value of $\eta$.

Let $G = (V, A)$ be a digraph.
Then the digraph $G^{-1} = (V,A^{-1})$ is so-called the {\it transpose graph} of $G$.
Also, when $G$ is equipped with an $\eta$-function $\theta$,
we sometimes write as $\theta = \theta_G$.
For an $\eta$-function $\theta$,
we define a function $-\theta$ by $(-\theta)(a) = - \theta (a)$ for any arc $a \in A(G^{\pm})$.
Clearly,
\begin{equation} \label{81}
-\theta_{G} = \theta_{G^{-1}}
\end{equation}
follows.
First, we state the following.

\begin{pro} \label{102}
{\it
Let $G$ be a digraph equipped with an $\eta$-function $\theta$.
Then we have
\[ U_{\theta}^{(2,\varepsilon)}(G) = U_{\theta}^{(2,\varepsilon)}(G^{-1}) \]
for $\varepsilon \in \{+,-\}$.
}
\end{pro}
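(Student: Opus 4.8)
The plan is to compare the quantum walk on $G$ with the one on its transpose graph $G^{-1}$ by exhibiting an explicit relation between the matrices $D_\theta U_\theta(G)^n$ and $D_{-\theta} U_{-\theta}(G^{-1})^n$, and then to observe that whatever relation we find preserves the sign of real parts entrywise. Recall from (\ref{81}) that $-\theta_G = \theta_{G^{-1}}$, so $U_{\theta}^{(2,\varepsilon)}(G^{-1})$ is literally $U_{-\theta}^{(2,\varepsilon)}(G)$ up to the obvious identification $A((G^{-1})^\pm) = A(G^\pm)$ (the underlying graph is unchanged). Thus the statement is really the symmetry $U_{\theta}^{(2,\varepsilon)}(G) = U_{-\theta}^{(2,\varepsilon)}(G)$, and I would phrase the proof in those terms.

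First I would record the elementary facts that only $S_\theta$ and $D_\theta$ depend on $\theta$, while $K$, $C$, $S(G^\pm)$ and hence $U(G^\pm)$ do not; in particular, by (ii) of Lemma~\ref{32}, $D_\theta U_\theta = U(G^\pm) = D_{-\theta} U_{-\theta}$. Next I would establish a conjugation relation: since $(D_\theta)_{aa} = e^{\theta(a)i}$ and $\theta(a^{-1}) = -\theta(a)$, one checks directly that $S D_\theta S = D_{-\theta}$ where $S = S(G^\pm)$ (flipping an arc negates $\theta$), and more importantly that $S U_\theta S$ relates to $U_{-\theta}$. Concretely I expect $S_{-\theta} = S D_\theta S_\theta D_\theta S = \dots$; the cleanest route is probably to write $U_\theta = D_\theta^{-1} U(G^\pm)$ (valid at least in the regular case by Lemma~\ref{76}(i), but in fact $D_\theta U_\theta = U(G^\pm)$ holds in general by Lemma~\ref{32}(ii)), so that $D_\theta U_\theta^n = U(G^\pm) U_\theta^{n-1} = U(G^\pm)(D_\theta^{-1}U(G^\pm))^{n-1}$, and similarly for $-\theta$. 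Then conjugating by $S$ and using $S D_\theta^{-1} S = D_{-\theta}^{-1}$ and $S U(G^\pm) S = U(G^\pm)^{\top}$ (the Grover matrix of an undirected graph satisfies $S U S = U^\top$, which follows from (\ref{53})) should convert the $\theta$-expression into the transpose of the $(-\theta)$-expression.

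The upshot I am aiming for is an identity of the shape
\[
S\,(D_\theta U_\theta^2)\,S = \big(D_{-\theta} U_{-\theta}^2\big)^{\top},
\]
or possibly the same statement without the transpose — either way, conjugation by the permutation matrix $S$ and transposition are both operations that permute entries and leave each entry's value (hence its real part, hence its sign) unchanged. Therefore $\re(D_\theta U_\theta^2)_{ab} > 0$ if and only if the corresponding entry of $D_{-\theta}U_{-\theta}^2$ is positive, which after re-identifying indices is exactly the claim $U_\theta^{(2,+)}(G) = U_\theta^{(2,+)}(G^{-1})$; the negative support case is identical with the inequality reversed.

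The main obstacle will be pinning down the precise conjugation identity for $S_\theta$ (equivalently for $U_\theta$) under $a \mapsto a^{-1}$: one must be careful that $S_\theta$ is not symmetric in general, that the relation $D_\theta S_\theta = S(G^\pm)$ from Lemma~\ref{32}(i) mixes a left factor of $D_\theta$ with a right "factor" hidden in $\theta(b)$, and that flipping all arcs simultaneously sends $\theta$ to $-\theta$ and conjugates each matrix by $S$. Once the single-step identity $S U_\theta S = (\text{something in }U_{-\theta})$ is nailed down, raising to the square and inserting the $D_\theta$ prefactor is a routine bookkeeping check, which I would present compactly rather than in full detail.
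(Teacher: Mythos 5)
Your reduction of the statement to comparing $\theta$ with $-\theta$ on the same underlying graph is correct, and the conjugation identity you are aiming for is in fact true: using $SUS=U^{\top}$ and $SD_{\theta}S=D_{-\theta}$ one obtains $S\,(D_{\theta}U_{\theta}^{2})\,S=(D_{-\theta}U_{-\theta}^{2})^{\top}$. But this identity does not prove the proposition. It relates the $(a,b)$ entry of $D_{\theta}U_{\theta}^{2}$ to the $(b^{-1},a^{-1})$ entry of $D_{-\theta}U_{-\theta}^{2}$, so what you get is $U_{\theta}^{(2,\varepsilon)}(G)=S\,\bigl(U_{\theta}^{(2,\varepsilon)}(G^{-1})\bigr)^{\top}S$ --- equality of the support matrices up to a simultaneous permutation and transposition. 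That yields cospectrality, but not the entrywise identity $U_{\theta}^{(2,\varepsilon)}(G)=U_{\theta}^{(2,\varepsilon)}(G^{-1})$ that is claimed: as you note yourself, $A(G^{\pm})$ and $A((G^{-1})^{\pm})$ are literally the same index set under the identity identification, so there is no further ``re-identification of indices'' available at the end of your argument. The phrase ``the corresponding entry'' conceals exactly this mismatch of positions.

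The missing idea is much simpler than the conjugation machinery. From $D_{\theta}U_{\theta}=U$ (Lemma~\ref{32}) one has $D_{\theta}U_{\theta}^{2}=UD_{\theta}^{-1}U$, and likewise $D_{-\theta}U_{-\theta}^{2}=UD_{\theta}U$; since $U=U(G^{\pm})$ is a real matrix and $\overline{e^{-\theta(z)i}}=e^{\theta(z)i}$, these two matrices are entrywise complex conjugates of one another, so their real parts agree entry by entry at the \emph{same} position $(a,b)$. This is the paper's proof: $\re(UD_{\theta}^{-1}U)_{ab}=\sum_{z}U_{az}U_{zb}\cos\theta(z)=\re(UD_{\theta}U)_{ab}$, and then $-\theta_{G}=\theta_{G^{-1}}$ finishes the argument. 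I would replace the $S$-conjugation plan with this one-line observation; your conjugation identity is still worth recording, but as a separate symmetry of the walk, not as a proof of this proposition.
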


\begin{proof}
It is enough to show that 
\[ \re(D_{\theta}(G)U_{\theta}(G)^2)_{ab} = \re(D_{\theta}(G^{-1})U_{\theta}(G^{-1})^2)_{ab} \]
holds for any arcs $a,b \in A(G^{\pm})$.
Writing as $U = U(G^{\pm})$, we can calculate as follows:
\begin{align*}
\re(D_{\theta}(G)U_{\theta}(G)^2)_{ab} 
&= \re(U D_{\theta}(G)^{-1} U)_{ab} \tag{by (ii) of Lemma~\ref{32}} \\
&= \sum_{z \in A(G^{\pm})} \re \left( e^{-\theta_{G}(z)} U_{az} U_{zb} \right) \\
&= \sum_{z \in A(G^{\pm})} \re \left( e^{-(-\theta_{G}(z))} U_{az} U_{zb} \right) \\
&= \sum_{z \in A(G^{\pm})} \re \left( e^{-(\theta_{G^{-1}}(z))} U_{az} U_{zb} \right) \tag{by (\ref{81})} \\
&= \re(U D_{\theta}(G^{-1})^{-1} U)_{ab} \\
&= \re(D_{\theta}(G^{-1})U_{\theta}(G^{-1})^2)_{ab}.
\end{align*}
\end{proof}

From the above proposition,
we unfortunately cannot distinguish $G$ and $G^{-1}$ by $U_{\theta}^{(2,\varepsilon)}$.

\begin{lem} \label{41}
{\it
Let $G$ be a digraph equipped with an $\eta$-function $\theta$.
Then we have
\[ U_{-\theta}(G) = U_{\theta}(G^{-1}). \]
}
\end{lem}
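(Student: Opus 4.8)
The plan is to reduce everything to the single observation that reversing all arcs does not change the underlying graph. Explicitly, since $A(G^{-1}) = A(G)^{-1}$, we have $A((G^{-1})^{\pm 1}) = A(G)^{-1} \cup A(G) = A(G)^{\pm 1}$, so $(G^{-1})^{\pm} = G^{\pm}$ as undirected graphs. Consequently the boundary operator $K$ and the coin operator $C = 2K^*K - I$ associated to $(G^{-1})^{\pm}$ are literally the same matrices as those associated to $G^{\pm}$; in particular $C((G^{-1})^{\pm}) = C(G^{\pm})$.

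Next I would compare the two shift operators. The transfer matrix $U_{-\theta}(G)$ is built from $S_{-\theta}(G^{\pm})$, whose entries are $(S_{-\theta})_{ab} = e^{(-\theta)(b)i}\delta_{a,b^{-1}} = e^{-\theta_{G}(b)i}\delta_{a,b^{-1}}$, whereas $U_{\theta}(G^{-1})$ is built from $S_{\theta_{G^{-1}}}((G^{-1})^{\pm})$, whose entries are $(S_{\theta_{G^{-1}}})_{ab} = e^{\theta_{G^{-1}}(b)i}\delta_{a,b^{-1}}$. By (\ref{81}) we have $\theta_{G^{-1}} = -\theta_{G}$, so these two matrices are equal entrywise, i.e.\ $S_{-\theta}(G^{\pm}) = S_{\theta_{G^{-1}}}((G^{-1})^{\pm})$. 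One should also remark in passing that $-\theta$ again satisfies the self-adjointness condition of Lemma~\ref{300}, since $(-\theta)(a) + (-\theta)(a^{-1}) = -(\theta(a)+\theta(a^{-1})) \in 2\pi\MB{Z}$, so that $U_{-\theta}(G)$ is a legitimate transfer matrix and the statement makes sense.

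Finally I would simply multiply the two identities: $U_{-\theta}(G) = S_{-\theta}(G^{\pm})\,C(G^{\pm}) = S_{\theta_{G^{-1}}}((G^{-1})^{\pm})\,C((G^{-1})^{\pm}) = U_{\theta}(G^{-1})$. There is no genuine obstacle here; the content is pure bookkeeping, and the only point that deserves a moment's attention is that the underlying graph $G^{\pm}$ is unchanged when all arcs are reversed, which is exactly what lets $K$ and $C$ carry over verbatim while the sign flip in $\theta$ matches the passage from $G$ to $G^{-1}$.
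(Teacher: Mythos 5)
Your proposal is correct and follows essentially the same route as the paper: both arguments reduce the claim to the entrywise identity $S_{-\theta}(G)=S_{\theta}(G^{-1})$ via equation (\ref{81}) and then multiply by the common coin operator. Your additional remarks, that $(G^{-1})^{\pm}=G^{\pm}$ so $K$ and $C$ carry over verbatim and that $-\theta$ still satisfies Lemma~\ref{300}(ii), are points the paper leaves implicit, but the core argument is identical.
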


\begin{proof}
By (\ref{81}), we have
\begin{align*}
(S_{-\theta}(G))_{ab} &= e^{(-\theta_{G})(b) i} \delta_{a,b^{-1}} \\
&= e^{\theta_{G^{-1}}(b) i} \delta_{a,b^{-1}} \\
&= (S_{\theta}(G^{-1}))_{ab},
\end{align*}
so $U_{-\theta}(G) = S_{-\theta}(G)C = S_{\theta}(G^{-1})C = U_{\theta}(G^{-1})$.
\end{proof}

For an $\eta$-function $\theta$,
the function $-\theta$ is nothing but the $(-\eta)$-function.
By Lemma~\ref{41},
we need to consider only the range $0 \leq \eta \leq \pi$
since we consider $\eta$ to be the rotation angle.

Now, we determine the structure of $U_{\theta}^{(2,\varepsilon)}$
for $\varepsilon \in \{+,-\}$.
For two arcs $a,b$,
we define the ordered pair $m(a,b) = (t(b), o(a))$.
Remark that $m(a,b)$ does not necessarily belong to the arc set.
Let $G$ be a $k$-regular digraph equipped with an $\eta$-function $\theta$ and $U = U(G^{\pm})$.
Observe that
\[ (D_{\theta}U_{\theta}^2)_{ab} = \sum_{z \in A(G^{\pm})} e^{-\theta(z)i} U_{az} U_{zb} \]
and by Lemma~\ref{73},
if the contents of the sum appear, it is just one and it is nothing but $z = m(a,b)$.
Then
\begin{equation} \label{92}
(D_{\theta}U_{\theta}^2)_{ab} = e^{-\theta(m(a,b))i} (U^2)_{ab},
\end{equation}
so the value of $(D_{\theta}U_{\theta}^2)_{ab}$ is,
roughly speaking, either rotated $U^2_{ab}$
or non-rotated $U^2_{ab}$,
and it depends on whether $m(a,b) \in A(G) \cap A(G)^{-1}$ or not.
Therefore, we define the following matrix $R$,
which is indexed by $A(G^{\pm})$, such that
\[ R_{ab} =
\begin{cases}
1 \quad &\text{if $m(a,b) \in A(G) \cap A(G)^{-1}$,} \\
0 \quad &\text{otherwise.}
\end{cases}
\]
By using this matrix,
$U_{\theta}^{(2,\varepsilon)}$ can be described depending on the value of $\eta$ as follows.
For $\varepsilon \in \{+,-\}$,
we denote the element of $\{+,-\} \setminus \{\varepsilon\}$ by $-\varepsilon$.

\begin{thm} \label{100}
{\it
Let $G$ be a digraph equipped with an $\eta$-function $\theta$
and $U = U(G^{\pm})$ be the Grover transfer matrix of $G^{\pm}$.
For $\varepsilon \in \{+,-\}$, we have
\[ U_{\theta}^{(2,\varepsilon)} =
\begin{cases}
(U^2)^{\varepsilon} \quad &\text{if $0 \leq \eta < \frac{\pi}{2}$,} \\
(U^2)^{\varepsilon} \circ R \quad &\text{if $\eta = \frac{\pi}{2}$,} \\
(U^2)^{\varepsilon} \circ R + (U^2)^{-\varepsilon} \circ (J-R) &\text{if $\frac{\pi}{2} < \eta \leq \pi$,}
\end{cases}
\]
where $A \circ B$ is the Hadamard product of $A$ and $B$,
which is defined by $(A \circ B)_{xy} = A_{xy}B_{xy}$.
}
\end{thm}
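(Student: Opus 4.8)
The plan is to argue entrywise, starting from the identity~(\ref{92}), which already reduces $(D_{\theta}U_{\theta}^{2})_{ab}$ to the single product $e^{-\theta(m(a,b))i}(U^{2})_{ab}$ whenever $m(a,b)\in A(G^{\pm})$, and to $0$ (together with $(U^{2})_{ab}=0$) otherwise, since by Lemma~\ref{73} the defining sum is then empty. Because $U=U(G^{\pm})$ is a real matrix, $(U^{2})_{ab}\in\MB{R}$, so taking real parts in~(\ref{92}) gives $\re(D_{\theta}U_{\theta}^{2})_{ab}=\cos\bigl(\theta(m(a,b))\bigr)\,(U^{2})_{ab}$ on the pairs $a,b$ for which $m(a,b)$ is an arc, and $\re(D_{\theta}U_{\theta}^{2})_{ab}=0=(U^{2})_{ab}$ otherwise. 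Hence the sign of $\re(D_{\theta}U_{\theta}^{2})_{ab}$ is governed by the sign of $\cos\bigl(\theta(m(a,b))\bigr)$ together with that of $(U^{2})_{ab}$.

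Next I would isolate the phase using the definition of the $\eta$-function: $\theta(m(a,b))=0$ precisely when $m(a,b)$ is a digon, i.e.\ when $R_{ab}=1$, and $\theta(m(a,b))=\pm\eta$ when $m(a,b)$ is an arc that is not a digon, i.e.\ when $R_{ab}=0$ but $(U^{2})_{ab}\neq 0$. Therefore $\cos\bigl(\theta(m(a,b))\bigr)=1$ on the digon entries and $\cos\bigl(\theta(m(a,b))\bigr)=\cos\eta$ on the remaining entries, so $\re(D_{\theta}U_{\theta}^{2})_{ab}$ equals $(U^{2})_{ab}$ whenever $R_{ab}=1$ and equals $\cos\eta\cdot(U^{2})_{ab}$ whenever $R_{ab}=0$.

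The three cases of the statement then drop out by inspecting the sign of $\cos\eta$. If $0\le\eta<\frac{\pi}{2}$ then $\cos\eta>0$, so $\re(D_{\theta}U_{\theta}^{2})_{ab}$ has the same sign as $(U^{2})_{ab}$ for all $a,b$, giving $U_{\theta}^{(2,\varepsilon)}=(U^{2})^{\varepsilon}$. If $\eta=\frac{\pi}{2}$ then $\cos\eta=0$, so the non-digon entries become real-zero and contribute to neither support, while the digon entries carry the sign of $(U^{2})_{ab}$, giving $U_{\theta}^{(2,\varepsilon)}=(U^{2})^{\varepsilon}\circ R$. If $\frac{\pi}{2}<\eta\le\pi$ then $\cos\eta<0$: the digon entries retain the sign of $(U^{2})_{ab}$ and contribute $(U^{2})^{\varepsilon}\circ R$, whereas the non-digon entries flip the sign and contribute $(U^{2})^{-\varepsilon}\circ(J-R)$, and adding the two pieces yields the stated formula.

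The step that needs the most attention is the bookkeeping of degenerate entries rather than any real computation: one must verify that the term $(U^{2})^{-\varepsilon}\circ(J-R)$ is only activated where the real part is genuinely meaningful, i.e.\ that $(U^{2})_{ab}\neq 0$ forces $m(a,b)\in A(G^{\pm})$ so that $\theta(m(a,b))$ is defined, and conversely that when $m(a,b)$ is not an arc the matrices $(U^{2})^{\pm\varepsilon}$ vanish there so the Hadamard products with $R$ and $J-R$ both vanish. Both facts are exactly what Lemma~\ref{73} (equivalently Corollary~\ref{82}) supplies, so once they are invoked the argument reduces to the case distinction above; the regularity hypothesis $k\ge 3$ is in force throughout, inherited from Lemma~\ref{73} and~(\ref{92}).
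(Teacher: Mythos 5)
Your argument is correct and follows essentially the same route as the paper's proof: both start from~(\ref{92}), split entries according to whether $m(a,b)$ is a digon (i.e.\ $R_{ab}=1$) or not, and then read off the supports from the sign of $\cos\eta$, with Lemma~\ref{73} handling the degenerate entries where $m(a,b)\notin A(G^{\pm})$. Your version is in fact slightly more explicit than the paper's in writing the real part as $\cos\bigl(\theta(m(a,b))\bigr)(U^{2})_{ab}$ and in flagging that the regularity hypothesis of Lemma~\ref{73} is being used.
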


\begin{proof}
We only provide a proof in the case of $\varepsilon = +$.
Fix arcs $a,b \in A(G^{\pm})$.
If $m(a,b) \not\in A(G^{\pm})$, then $(D_{\theta}U_{\theta}^2)_{ab} = 0$ by Lemma~\ref{73}.
From (\ref{92}),
\begin{equation} \label{201}
(D_{\theta}U_{\theta}^2)_{ab} =
\begin{cases}
(U^2)_{ab} \qquad &\text{if $m(a,b) \in A(G) \cap A(G^{-1})$,} \\
e^{\mp i \eta} (U^2)_{ab} \qquad &\text{if $m(a,b) \in A(G^{\pm}) \setminus (A(G) \cap A(G^{-1}))$,} \\
0 \qquad &\text{otherwise.}
\end{cases}
\end{equation}

Suppose $0 \leq \eta < \frac{\pi}{2}$.
By ($\ref{201}$),
we have $\re(D_{\theta}U_{\theta}^2)_{ab} = \re(U^2)_{ab}$,
so $U_{\theta}^{(2,+)} = (U^2)^+$ holds.

In the case of $\eta = \frac{\pi}{2}$,
we see that
$\re(D_{\theta}U_{\theta}^2)_{ab} > 0$ if and only if
$(U^2)_{ab} > 0$ and $m(a,b) \in A(G) \cap A(G)^{-1}$ by (\ref{201}).
This implies $(U_{\theta}^{(2,+)})_{ab} = ((U^2)^+)_{ab} R_{ab}$.

Similarly, in the case of $\frac{\pi}{2} < \eta \leq \pi$,
$\re(D_{\theta}U_{\theta}^2)_{ab} > 0$ if and only if
\[ (U^2)_{ab} > 0 \text{ and } m(a,b) \in A(G) \cap A(G)^{-1},\]
or
\[ (U^2)_{ab} < 0 \text{ and } m(a,b) \in A(G^{\pm}) \setminus (A(G) \cap A(G^{-1})) \]
by (\ref{201}).
This implies $(U_{\theta}^{(2,+)})_{ab} = ((U^2)^+)_{ab} R_{ab} + ((U^2)^-)_{ab} (1-R_{ab})$.
Summarizing the above, we have the statement.
\end{proof}

By this theorem,
we see that the structure of $U_{\theta}^{(2,\varepsilon)}(G)$
is determined by the underlying graph $G^{\pm}$ if $0 \leq \eta < \frac{\pi}{2}$.
On the other hand, if $\frac{\pi}{2} \leq \eta \leq \pi$,
the information of digraphs lives.
However, if a digraph $G$ does not have any digons,
$R$ is determined to be the zero matrix,
so we have
\[ U_{\theta}^{(2,\varepsilon)} =
\begin{cases}
(U^2)^{\varepsilon} \quad &\text{if $0 \leq \eta < \frac{\pi}{2}$,} \\
O \quad &\text{if $\eta = \frac{\pi}{2}$,} \\
(U^2)^{-\varepsilon} &\text{if $\frac{\pi}{2} < \eta \leq \pi$.}
\end{cases}
\]
In any case, it may be interesting that
the structure of $U_{\theta}^{(2,\varepsilon)}$ is represented
by the positive and negative supports of the underlying graph
and it moreover changes by the value of $\eta$.

\section{Counting digons}

In this section,
we find the number of digons via $U_{\theta}^{(2,+)}$.
As a consequence, we show that
Guo--Mohar's digraphs $Y_{a,n-a}$ which could not be identified
by eigenvalues of $H, H_{\eta}, U_{\theta}, U_{\theta}^{(1,\varepsilon)}$
can be half identified by $U_{\theta}^{(2,+)}$.

\begin{pro} \label{101}
{\it
Let $G$ be a digraph equipped with an $\eta$-function $\theta$.
We denote the number of digons in $G$ by $d$.
Then we have
\[
\Tr(U_{\theta}^{(2,+)}) =
\begin{cases}
2 |E(G^{\pm})| \quad &\text{if $0 \leq \eta < \frac{\pi}{2}$,} \\
2d \quad &\text{if $\frac{\pi}{2} \leq \eta \leq \pi$.}
\end{cases}
\]
}
\end{pro}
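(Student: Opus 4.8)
The plan is to evaluate $\Tr(U_{\theta}^{(2,+)}) = \sum_{a \in A(G^{\pm})} (U_{\theta}^{(2,+)})_{aa}$ by substituting the diagonal entries into the explicit formula for $U_{\theta}^{(2,+)}$ provided by Theorem~\ref{100}. So the first step is to understand the two building blocks of that formula along the diagonal. On the one hand, the pair $(a,a)$ always belongs to case~(i) of Lemma~\ref{73}, so Corollary~\ref{82} gives $(U^2)_{aa} > 0$ for every arc $a \in A(G^{\pm})$; consequently $((U^2)^+)_{aa} = 1$ and $((U^2)^-)_{aa} = 0$ for all $a$. On the other hand, since $m(a,a) = (t(a), o(a)) = a^{-1}$, we get $R_{aa} = 1$ precisely when $a^{-1} \in A(G) \cap A(G)^{-1}$, and because this set is closed under taking inverses, that is equivalent to $a \in A(G) \cap A(G)^{-1}$, i.e.\ to $a$ lying in a digon of $G$.

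With these computations in place, I would split into the three regimes of Theorem~\ref{100}. When $0 \le \eta < \frac{\pi}{2}$ we have $U_{\theta}^{(2,+)} = (U^2)^+$, hence every diagonal entry is $1$ and $\Tr(U_{\theta}^{(2,+)}) = |A(G^{\pm})| = 2|E(G^{\pm})|$. When $\eta = \frac{\pi}{2}$ we have $U_{\theta}^{(2,+)} = (U^2)^+ \circ R$, so $(U_{\theta}^{(2,+)})_{aa} = ((U^2)^+)_{aa} R_{aa} = R_{aa}$; and when $\frac{\pi}{2} < \eta \le \pi$ we have $U_{\theta}^{(2,+)} = (U^2)^+ \circ R + (U^2)^- \circ (J - R)$, whose diagonal is $((U^2)^+)_{aa} R_{aa} + ((U^2)^-)_{aa}(1 - R_{aa}) = R_{aa}$ because $((U^2)^-)_{aa} = 0$. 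Thus for all $\eta$ with $\frac{\pi}{2} \le \eta \le \pi$ the trace equals the number of arcs lying in digons.

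It then remains to count that number: each of the $d$ digons $\{x,y\}$ contributes exactly the two arcs $(x,y)$ and $(y,x)$ to $A(G) \cap A(G)^{-1}$, and every arc of $A(G) \cap A(G)^{-1}$ arises in this way, so there are $2d$ such arcs and $\Tr(U_{\theta}^{(2,+)}) = 2d$ in the range $\frac{\pi}{2} \le \eta \le \pi$. I do not expect a genuine obstacle; the only delicate point is the identification $m(a,a) = a^{-1}$ combined with the inversion-symmetry of $A(G) \cap A(G)^{-1}$, which is exactly what turns the diagonal of $R$ into the digon count, together with recalling that $|A(G^{\pm})| = 2|E(G^{\pm})|$.
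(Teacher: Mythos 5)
Your proof is correct and follows essentially the same route as the paper: apply Theorem~\ref{100}, use Lemma~\ref{73} and Corollary~\ref{82} to see that every diagonal entry of $(U^2)^+$ is $1$, and then count the arcs with $R_{aa}=1$ as the $2d$ arcs lying in digons. If anything you are slightly more explicit than the paper, since you spell out both that $m(a,a)=a^{-1}$ together with the inversion-symmetry of $A(G)\cap A(G)^{-1}$ gives $R_{aa}=1$ iff $a$ lies in a digon, and that the extra term $(U^2)^-\circ(J-R)$ appearing for $\frac{\pi}{2}<\eta\leq\pi$ contributes nothing on the diagonal because $((U^2)^-)_{aa}=0$.
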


\begin{proof}
Let $U = U(G^{\pm})$.
For any arc $a \in A(G^{\pm})$,
the pair of arcs $(a,a)$ is in (i) of Lemma~\ref{73},
so $((U^2)^+)_{aa} = 1$ by Corollary~\ref{82}.
Suppose $0 \leq \eta < \frac{\pi}{2}$.
By Theorem~\ref{100}, we have
\[ \Tr(U_{\theta}^{(2,+)}) = \Tr((U^2)^+)
= \sum_{a \in A(G^{\pm})} ((U^2)^+)_{aa} = |A(G^{\pm})| = 2|E(G^{\pm})|. \]
Suppose $\frac{\pi}{2} \leq \eta \leq \pi$.
Since $R_{aa} = 1$ if and only if $a \in A(G) \cap A(G)^{-1}$,
\begin{align*}
\Tr(U_{\theta}^{(2,+)}) &= \Tr((U^2)^+ \circ R) \tag{by Theorem~\ref{100}} \\
&= \sum_{a \in A(G^{\pm})} ((U^2)^+)_{aa} R_{aa} \\
&= \sum_{a \in A(G) \cap A(G)^{-1}} 1 \\
&= 2d.
\end{align*}
\end{proof}

Using this, we see that Guo--Mohar's digraphs $Y_{a,n-a}$
can be half identified by eigenvalues of $U_{\theta}^{(2,+)}$.

\begin{cor}
{\it
Let $\frac{\pi}{2} \leq \eta \leq \pi$ and we consider the $\eta$-function $\theta$.
Suppose $a \geq n-a$ and $b \geq n-b$,
i.e., $a,b \geq \frac{n}{2}$.
If $a \neq b$, then
\[ \Phi(U_{\theta}^{(2,+)}(Y_{a, n-a})) \neq \Phi(U_{\theta}^{(2,+)}(Y_{b, n-b})), \]
where $\Phi(M)$ denotes the characteristic polynomial of a matrix $M$.
}
\end{cor}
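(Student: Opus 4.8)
The plan is to reduce the corollary directly to Proposition~\ref{101} by showing that the trace of $U_{\theta}^{(2,+)}$ already distinguishes $Y_{a,n-a}$ from $Y_{b,n-b}$ when $a \neq b$ and $a,b \geq n/2$. Since $\Tr(M)$ is the negative of the coefficient of the second-highest-degree term of $\Phi(M)$ (up to sign depending on the normalization), if two matrices have different traces their characteristic polynomials cannot coincide. So it suffices to compute, for $\frac{\pi}{2} \leq \eta \leq \pi$, the number of digons of $Y_{a,n-a}$ and to check that this number is a strictly monotone function of $a$ on the range $a \geq n/2$.

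First I would count the digons of $Y_{a,n-a}$. Recalling the definition of $A(Y_{a,n-a})$: among the upper vertices $[a]$ every pair $\{x,y\}$ contributes both arcs $(x,y)$ and $(y,x)$, hence a digon; likewise every pair of lower vertices gives a digon; but each pair consisting of one upper vertex $x$ and one lower vertex $y$ contributes only the single arc $(x,y)$, so these are \emph{not} digons. Therefore the number of digons is
\[ d(a) = \binom{a}{2} + \binom{n-a}{2}. \]
By Proposition~\ref{101}, for $\frac{\pi}{2} \leq \eta \leq \pi$ we have $\Tr(U_{\theta}^{(2,+)}(Y_{a,n-a})) = 2d(a) = 2\left(\binom{a}{2} + \binom{n-a}{2}\right)$.

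Next I would show $d(a)$ is strictly increasing in $a$ for $a \geq n/2$. Write $d(a) = \frac{1}{2}\bigl(a^2 - a + (n-a)^2 - (n-a)\bigr) = \frac{1}{2}\bigl(2a^2 - 2na + n^2 - n\bigr) = a^2 - na + \tfrac{n^2-n}{2}$, a quadratic in $a$ with vertex at $a = n/2$; it is symmetric about $n/2$ and strictly increasing for $a > n/2$ (and strictly increasing on the integers $\geq n/2$ as long as $n \geq 1$). Hence if $a \neq b$ with $a,b \geq n/2$, then $d(a) \neq d(b)$, so $\Tr(U_{\theta}^{(2,+)}(Y_{a,n-a})) \neq \Tr(U_{\theta}^{(2,+)}(Y_{b,n-b}))$, and therefore the two characteristic polynomials differ.

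The only subtlety — and the step I would be most careful about — is the edge case $a = n/2$ (possible when $n$ is even), where $a$ and $n-a$ coincide and one should confirm the hypothesis $a \geq n-a$ is consistent and that strict monotonicity still holds for the relevant integer pairs; this is immediate from the quadratic formula above since two distinct integers $\geq n/2$ cannot be equidistant from $n/2$ on the same side. I would also note explicitly why equal characteristic polynomials force equal traces (they share the coefficient of $x^{|A(G^{\pm})|-1}$), which is the link that turns Proposition~\ref{101} into the stated non-isomorphism-type conclusion. No genuinely hard estimate is involved; the content is entirely in the digon count supplied by Proposition~\ref{101} together with the elementary monotonicity of $\binom{a}{2} + \binom{n-a}{2}$.
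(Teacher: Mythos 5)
Your proof is correct and follows essentially the same route as the paper: count the digons of $Y_{a,n-a}$ as $\binom{a}{2}+\binom{n-a}{2}$, invoke Proposition~7.1 to read off the trace of $U_{\theta}^{(2,+)}$, and use the strict monotonicity of this count for $a\geq n/2$ (the paper writes the same fact as $d(a)-d(b)=(a-b)(a+b-n)>0$) to conclude the characteristic polynomials differ. No gap.
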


\begin{proof}
We denote the number of digons in the digraph $Y_{k, n-k}$ by $d(k)$.
Then 
\[ d(k) = \frac{k(k-1)}{2} + \frac{(n-k)(n-k-1)}{2}. \]
Without loss of generality,
we can assume $a>b$.
Since $a+b > n$, we have
\[ d(a) - d(b) = (a-b)(a+b-n) > 0. \]
In particular, $d(a) \neq d(b)$.
By (ii) of Proposition~\ref{101}, 
it can be seen that the coefficients of the characteristic polynomials are different.
\end{proof}

Note that we have $d(a) = d(n-a)$, but
$Y_{a, n-a}$ and $Y_{n-a, a}$ are not isomorphic to each other unless $a \in \{0,n\}$.
However, $Y_{n-a, a}^{-1} = Y_{a, n-a}$,
so we unfortunately have
$\Phi(U_{\theta}^{(2,+)}(Y_{a, n-a})) = \Phi(U_{\theta}^{(2,+)}(Y_{n-a, a}))$
by Proposition~\ref{102}.
The two digraphs $Y_{a, n-a}$ and $Y_{n-a, a}$ cannot be identified
by the spectrum of $U_{\theta}^{(2,+)}$.

\section{Tables}

Up to the previous section,
we have obtained several new matrices defined by digraphs.
In this section, using computer,
we observe the behavior that small digraphs can be identified
by eigenvalues of these matrices with how much.
Here, we made the following tables similar to the one for adjacency matrices
and Hermitian adjacency matrices made by Guo and Mohar \cite{GM}.

\begin{table}[H]
  \begin{tabular}{l|r|r|r|r|r}
\hline
Order&$2$&$3$&$4$&$5$&$6$\\
\hline
\hline
Number of digraphs&
$3$&
$16$&
$218$&
$9608$&
$1540944$\\
\hline
Number of distinct characteristic polynomials&
$2$&
$7$&
$46$&
$718$&
$35237$\\
\hline
Maximum size of a $A$-cospectral class&
$2$&
$6$&
$42$&
$592$&
$15842$\\
\hline
Number of digraphs determined by $A$-spectrum&
$1$&
$5$&
$23$&
$166$&
$2314$\\
\hline
Number of $A$-cospectral classes containing:&&&&&\\
 a) no graphs&
$0$&
$3$&
$35$&
$685$&
$35086$\\
b) only graphs&
$ 1$&
$ 2$&
$ 5$&
$ 15$&
$ 69$\\
c) at least one graph and a digraph&
$1$&
$ 2$&
$6$&
$18$&
$82$\\
\hline
  \end{tabular}
  \caption{The adjacency matrix spectra of small digraphs} \label{1004}
\end{table}

\begin{table}[H]
  \begin{tabular}{l|r|r|r|r|r}
\hline
Order 
&$2$&$3$&$4$&$5$&$6$\\
\hline
\hline
Number of digraphs&
$3$&
$16$&
$218$&
$9608$&
$1540944$\\
\hline
Number of distinct characteristic polynomials&
$2$&
$7$&
$41$&
$765$&
$81175$\\
\hline
Maximum size of a $H_\eta$-cospectral class&
$2$&
$6$&
$18$&
$84$&
$888$\\
\hline
Number of digraphs determined by $H_\eta$-spectrum&
$1$&
$3$&
$9$&
$82$&
$1559$\\
\hline
Number of $H_\eta$-cospectral classes containing:&&&&&\\
 a) no graphs&
$0$&
$3$&
$30$&
$732$&
$81024$\\
b) only graphs&
$ 1$&
$ 1$&
$ 1$&
$ 1$&
$ 1$\\
c) at least one graph and a digraph&
$1$&
$ 3$&
$10$&
$ 32$&
$ 150$\\
\hline
  \end{tabular}
   \caption{The $H_{\eta}$-spectra of small digraphs with $\eta = \frac{\pi}{3}$} \label{1001}
\end{table}

\begin{table}[H]
  \begin{tabular}{l|r|r|r|r|r}
\hline
Order&$2$&$3$&$4$&$5$&$6$\\
\hline
\hline
Number of digraphs&
$3$&
$16$&
$218$&
$9608$&
$1540944$\\
\hline
Number of distinct characteristic polynomials&
$2$&
$6$&
$27$&
$275$&
$10920$\\
\hline
Maximum size of a $H$-cospectral class&
$2$&
$6$&
$21$&
$158$&
$1338$\\
\hline
Number of digraphs determined by $H$-spectrum&
$1$&
$2$&
$3$&
$5$&
$16$\\
\hline
Number of $H$-cospectral classes containing:&&&&&\\
 a) no graphs&
$0$&
$2$&
$16$&
$242$&
$10769$\\
b) only graphs&
$ 1$&
$ 1$&
$ 1$&
$ 1$&
$ 1$\\
c) at least one graph and a digraph&
$1$&
$ 3$&
$10$&
$ 32$&
$ 150$\\
\hline
  \end{tabular}
   \caption{The $H$-spectra of small digraphs} \label{1003}
\end{table}

\begin{table}[H]
  \begin{tabular}{l|r|r|r|r|r}
\hline
Order 
&$2$&$3$&$4$&$5$&$6$\\
\hline
\hline
Number of digraphs&
$3$&
$16$&
$218$&
$9608$&
$1540944$\\
\hline
Number of distinct characteristic polynomials&
$2$&
$5$&
$20$&
$150$&
$3698$\\
\hline
Maximum size of a $H_\eta$-cospectral class&
$2$&
$6$&
$27$&
$243$&
$2430$\\
\hline
Number of digraphs determined by $H_\eta$-spectrum&
$1$&
$1$&
$1$&
$1$&
$1$\\
\hline
Number of $H_\eta$-cospectral classes containing:&&&&&\\
 a) no graphs&
$0$&
$1$&
$9$&
$117$&
$3547$\\
b) only graphs&
$ 1$&
$ 1$&
$ 1$&
$ 1$&
$ 1$\\
c) at least one graph and a digraph&
$1$&
$ 3$&
$10$&
$ 32$&
$ 150$\\
\hline
  \end{tabular}
     \caption{The $H_{\eta}$-spectra of small digraphs with $\eta = \frac{2}{3} \pi$} \label{1002}
\end{table}

Let $E_n = (\{ 1,2, \dots, n \}, \emptyset)$ be the empty graph with $n$ vertices.
We give tables on $U_{\theta}^{(2,+)}$,
but the transfer matrix cannot be defined from $E_n$,
so this graph is excluded from the following tables.

\begin{table}[H]
  \begin{tabular}{l|r|r|r|r|r}
\hline
Order 
&$2$&$3$&$4$&$5$&$6$\\
\hline
\hline
Number of digraphs&
$3$&
$16$&
$218$&
$9608$&
$1540944$\\
\hline
Number of distinct characteristic polynomials&
$2$&
$6$&
$34$&
$371$&
$11748$\\
\hline
Maximum size of a $U_{\pi/2}^{(2,+)}$-cospectral class&
$1$&
$6$&
$53$&
$700$&
$37013$
\\
\hline
Number of digraphs determined by $U_{\pi/2}^{(2,+)}$-spectrum&
$2$&
$4$&
$13$&
$50$&
$284$
\\
\hline
Number of $U_{\pi/2}^{(2,+)}$-cospectral classes containing:&&&&&\\
 a) no graphs&
$1$&
$3$&
$25$&
$339$&
$11598$
\\
b) only graphs&
$1$&
$3$&
$9$&
$32$&
$150$
\\
c) at least one graph and a digraph&
$0$&
$0$&
$0$&
$0$&
$0$
\\
\hline
  \end{tabular}
  \caption{The $U_{\theta}^{(2,+)}$-spectra of small digraphs except $E_n$ with $\eta = \frac{\pi}{2}$} \label{1005}
\end{table}

\begin{table}[H]
  \begin{tabular}{l|r|r|r|r|r}
\hline
Order 
&$2$&$3$&$4$&$5$&$6$\\
\hline
\hline
Number of digraphs&
$3$&
$16$&
$218$&
$9608$&
$1540944$\\
\hline
Number of distinct characteristic polynomials&
$2$&
$6$&
$45$&
$601$&
$20306$\\
\hline
Maximum size of a $U_{\eta}^{(2,+)}$-cospectral class&
$1$&
$6$&
$22$&
$204$&
$5120$
\\
\hline
Number of digraphs determined by $U_{\eta}^{(2,+)}$-spectrum&
$2$&
$4$&
$13$&
$47$&
$280$
\\
\hline
Number of $U_{\eta}^{(2,+)}$-cospectral classes containing:&&&&&\\
 a) no graphs&
$1$&
$3$&
$36$&
$569$&
$20156$
\\
b) only graphs&
$1$&
$3$&
$9$&
$27$&
$135$
\\
c) at least one graph and a digraph&
$0$&
$0$&
$0$&
$5$&
$15$
\\
\hline
  \end{tabular}
  \caption{The $U_{\theta}^{(2,+)}$-spectra of small digraphs except $E_n$
  with $\frac{\pi}{2} < \eta \leq \pi $} \label{1006}
\end{table}

\subsection{Discussion on the tables}

Characterizing graphs by eigenvalues is one of the interesting research topics in spectral graph theory.
In the case of undirected graphs,
there are many studies that characterize graphs by eigenvalues,
while there are many studies that construct {\it cospectral graphs},
which are non-isomorphic graphs with the same eigenvalues.
For example, \cite{BH, GMc, K} can be cited as references.
Chapter~14 of the first one is on spectral characterization and the other two are on constructing cospectral graphs.

In fact,
it is extremely difficult to find a matrix to characterize all graphs by eigenvalues.
Digraphs are even more difficult.
First, we compare Table~\ref{1004} and Table~\ref{1003}.
Table~\ref{1004} is obtained using the adjacency matrix,
and Table~\ref{1003} is obtained using the Hermitian adjacency matrix.
There are 1540944 digraphs with 6 vertices,
from which only 35237 distinct characteristic polynomials can be obtained.
In Table~\ref{1003}, there are 10920 distinct characteristic polynomials.
On the other hand,
according to the maximum size of a cospectral class (on 6 vertices) in both tables,
Table~\ref{1004} has 15842 and Table~\ref{1003} has 1338.
This shows that Hermitian adjacency matrices more easily find a specific digraph
from digraphs with the same eigenvalues.
In this way, the size of a cospectral class is also an important index for evaluating the goodness of matrices.
Such as $H_{\eta}$ and $U_{\theta}^{(n, +)}$,
we defined many matrices induced by digraphs. 
Table~\ref{1001},~\ref{1002},~\ref{1005},~\ref{1006} are the tables obtained from
$H_{\frac{\pi}{3}}$, $H_{\frac{2}{3}\pi}$, $U_{\theta}^{(2, +)}$ with $\eta = \frac{\pi}{2}$,
and $U_{\theta}^{(2, +)}$ with $\frac{\pi}{2} < \eta \leq \pi$, respectively.
Among these,
$H_{\frac{\pi}{3}}$ of Table~\ref{1001} is relatively good,
but $U_{\theta}^{(2, +)}$ is unexpectedly bad for any $\eta$.
It is worth investigating in the future how $\eta$ plays a role in distinguishing digraphs.
Note that the information of digraphs completely disappears
when $0 \leq \eta < \frac{\pi}{2}$ by Theorem~\ref{100}.
However, as for the transfer matrix,
there is still room for further consideration
because $U_{\theta}^{(n, +)}$ can be considered for any $n \in \MB{N}$.


\section{Summaries and remarks}

In this paper,
we proposed a quantum walk defined by digraphs,
and stated that the transfer matrix relates to Hermitian adjacency matrices.
Our quantum walk is a generalization of the Grover walk,
and there is an arbitrariness of perturbation.
If $\eta = 0$, this is nothing but the Grover walk on the underlying graph.
In Section 6,
we defined the positive and negative supports of the transfer matrix to the $n$-th power,
and clarified the structure for the case of $n = 2$.
Theorem~\ref{100} states that the structure of $U_{\theta}^{(2, \varepsilon)}$
discretely change depending on $\eta$.
However,
the values in Table~\ref{1005} and Table~\ref{1006} are by no means good.
Considerations for $n \geq 3$ should be done as our future tasks.
Finding an explicit formula on $U_{\theta}^{(n,\varepsilon)}$ is also interesting.

On the other hand,
the eigenvalues of our transfer matrix are roughly determined from
those of Hermitian adjacency matrices (Theorem~\ref{21}).
Therefore, eigenvalue analysis of Hermitian adjacency matrices themselves
seems to be important in order to study properties of the quantum walk
brought by features that each digraph has.
For example, research to determine periodic digraphs
while establishing a method for eigenvalue analysis of
Hermitian adjacency matrices may be interesting.
Also, researching matrices associated with quantum walks
to determine or estimate invariants of digraphs may be worthwhile.

\end{document}